\theoremstyle{plain} 
\newtheorem{theorem}{Theorem}[section]
\newtheorem{lemma}{Lemma}[section]
\newtheorem{statement}[theorem]{Statement}
\newtheorem{proposition}[theorem]{Proposition}
\newtheorem{remark}{Remark}
\theoremstyle{plain}
\newtheorem*{theorem*}{Theorem}
\newtheorem*{lemma*}{Lemma}
\author{E.D. Galkovskii\footnote{Chebyshev Laboratory, St. Petersburg State University, 14th Line V.O., 29B, Saint Petersburg 199178 Russia. E-mail: egor\_maths@list.ru.}\,, 
A.I. Nazarov\footnote{St. Petersburg Department of V.A. Steklov Mathematical Institute, Russian Academy of Sciences; St. Petersburg State University. 
E-mail: al.il.nazarov@gmail.com. }}
\title{A general trace formula for the differential operator\\ on a segment with the last coefficient perturbed\\ by a finite signed measure }
\date{}
\begin{document}

\maketitle

\hfill {\it To the memory of M.Z. Solomyak}
\bigskip

\begin{abstract}
	A first order trace formula is obtained for a regular differential operator perturbed by a finite signed measure multiplication operator.
\end{abstract}

\section{Introduction}

Consider an operator $\mathbb L$ on a segment $[a,b]$ that is defined by a differential expression of order $n \geqslant 2$
\begin{align}\label{Diff_Equation_P}
\ell:=(-i)^nD^n+\sum \limits_{k=0}^{n-2}{p_k(x)D^k},
\end{align}
(here $p_k\in L_1(a,b)$ are complex-valued functions) and boundary conditions
\begin{align}\label{Boundary_Conditions}
	(P_j(D)y)(a)+(Q_j(D)y)(b) = 0,\qquad j=0, \dots, n-1.
\end{align}
Here $P_j$ and $Q_j$ are polynomials whose degrees do not exceed $n-1$. Let $d_j$ be the maximum of degrees of $P_j$ and $Q_j$. Suppose $a_j$ and $b_j$ are the $d_j$-th coefficients of $P_j$ and $Q_j$ respectively (therefore, $a_j$, $b_j$ cannot be zeros simultaneously). 

We assume that the system of boundary conditions (\ref{Boundary_Conditions}) is normalized , i.e. $\sum\limits_{j=0}^{n-1}d_j$ is minimal among all the systems of boundary condition that can be obtained from (\ref{Boundary_Conditions}) by linear bijective transformations. See \cite[ch. II, $\mathsection 4$]{Neimark} for a detailed explanation and \cite{Shk} for a more advance treatment.

We assume the boundary conditions (\ref{Boundary_Conditions}) to be Birkhoff regular, see \cite[ch. II, $\mathsection 4$]{Neimark}. Then the operator $\mathbb{L}$ has purely discrete spectrum\footnote{We underline that we do not require $\mathbb{L}$ to be self-adjoint.}, which we denote by $\{\lambda_N\}_{_{N=1}}^{^\infty}$. In what follows we always enumerate the eigenvalues in ascending order of their absolute values according to their multiplicities (that means $|\lambda_N| \leqslant |\lambda_{N+1}|$). 

Let $\mathfrak M[a,b]$ be the space of finite complex-valued measures. Denote by $\mathbb{Q}$ the operator of multiplication by ${\mathfrak q} \in \mathfrak M[a,b]$. Then the operator $\mathbb{L}_{\mathfrak q}=\mathbb{L}+\mathbb{Q}$ has also a purely discrete spectrum $\{\lambda_N({\mathfrak q})\}_{_{N=1}}^{^\infty}$.
    
    We are interested in the regularized trace
\begin{align*}
	\mathcal{S}({\mathfrak q}) := \sum_{N=1}^{\infty} \bigg[\lambda_N({\mathfrak q})-\lambda_N-\frac{1}{b-a}\int\limits_{[a,b]} {\mathfrak q}(dx)\,\bigg].
\end{align*}
Without loss of generality we suppose that $\int\limits_{[a,b]} {\mathfrak q}(dx)=0$.

    The first formula for a regularized trace was obtained by I.M. Gelfand and B.M. Levitan in 1953. In \cite{GL} they considered the problem    
\begin{equation}\label{SL-GL}
	-y''+{\mathfrak q}(x)y=\lambda y; \qquad y(0) = y(\pi) = 0
\end{equation}
and showed that for real-valued function ${\mathfrak q}(x)\in {\cal C}^1[0,\pi]$ the following relation holds:
$$
	\mathcal{S}({\mathfrak q})= -\frac{{\mathfrak q}(0)+{\mathfrak q}(\pi)}{4}.
$$

The paper \cite{GL} generated many improvements and generalizations, see a survey of V.A. Sadovnichii and V.E. Podolskii \cite{SPSurvey}.

In the recent work \cite{SZN} A.I. Nazarov, D.M. Stolyarov and P.B. Zatitskiy obtained formula 
\begin{equation}\label{traceNSZ}
	\mathcal{S}({\mathfrak q}) = \frac{\psi_a(a+)}{2n}\cdot\textbf{tr}\,(\mathbb A)+\frac{\psi_b(b-)}{2n}\cdot\textbf{tr}\,(\mathbb B),
\end{equation}
for arbitrary $n\geqslant 2$ and regular boundary conditions, under assumptions that are standard now\footnote{Formula (\ref{traceNSZ}) was earlier proved by R.F. Shevchenko \cite{Shv} for the operator $\mathbb{L}$ without lower-order terms and a smooth function ${\mathfrak q}$.}; namely, ${\mathfrak q} \in L_1(a,b)$ and the functions 
$$
\psi_a(x)=\frac{1}{x-a}\int\limits_a^x {\mathfrak q}(t)dt,\qquad 
\psi_b(x)=\frac{1}{b-x}\int\limits_x^b {\mathfrak q}(t)dt
$$ 
have bounded variations at points $a$ and $b$ respectively. In (\ref{traceNSZ})  $\mathbb A$ and $\mathbb B$ stand for the matrices with elements that can be expressed in terms of $a_j$ and $b_j$, $j=0,\dots,n-1$. Moreover, it was shown in \cite{SZN} that in important special case, where the boundary conditions are {\bf almost separated}, the values $\textbf{tr}\,(\mathbb A)$ and $\textbf{tr}\,(\mathbb B)$ in (\ref{traceNSZ}) can be reduced and expressed using only the sums of degrees of polynomials $P_j$ and $Q_j$ respectively.

Absolutely new phenomenon was discovered in our century by A.M. Savchuk and A.A. Shkalikov \cite{S2000, SSh}. Namely, let  ${\mathfrak q}\in \mathfrak M[0,\pi]$ be a
signed measure locally continuous at points $0$ and $\pi$. Then for the problem (\ref{SL-GL}) we have
\begin{equation}\label{SL-SSh}
	\mathcal{S}({\mathfrak q}) = -\frac{{\mathfrak q}(0)+{\mathfrak q}(\pi)}{4}-\frac{1}{8}\sum_j h_j^2,
\end{equation}
where $h_j$ stand for the jumps of the distribution function for the measure ${\mathfrak q}$. The series $\mathcal{S}({\mathfrak q})$ in this case is summed by mean-value method. 

Thus, for ${\mathfrak q}\in \mathfrak M[a,b]$ the regularized trace becomes non-linear functional of ${\mathfrak q}$. For $\delta$-potential this effect was slightly generalized in \cite{Konechnaya}. 

We generalize formula (\ref{SL-SSh}) for the operator $\mathbb{L}$ with arbitrary regular boundary conditions.\medskip

The paper is organized as follows. Section $\mathsection 2$ contains main results and some intermediate assertions. These assertions are proved in $\mathsection\mathsection 3-5$. The Appendix includes asymptotics of eigenvalues and eigenfunctions of Sturm-Liouville operators. These asymptotics are used in the proof of Theorem 2.4.

Let us introduce some notation. One can split a complex-valued measure ${\mathfrak q}\in \mathfrak M[a,b]$ into two parts -- continuous and discrete. We denote them by $\mathfrak c$ and $\mathfrak d$ respectively, so that
\begin{equation}\label{c+d}
\mathfrak q=\mathfrak c+\mathfrak d=\mathfrak c+\sum\limits_j h_j\delta(x-x_j), \qquad \sum\limits_j |h_j|<\infty.
\end{equation}
Denote by $\|\mathfrak q\|$ the total variation of $\mathfrak q$. We also define the distribution function
$$
{\cal Q}(x)=\int\limits_{[a,x]} {\mathfrak q}(dt).
$$ 
Thus, $h_j$ is the jump of ${\cal Q}$ at the point $x_j$.

Denote by $\mathbb{L}_0$ the operator generated by the differential expression $\ell_0=(-i)^nD^n$ and regular boundary conditions (\ref{Boundary_Conditions}). The eigenvalues of $\mathbb{L}_0$ are denoted by $\{\lambda_N^0\}_{_{N=1}}^{^{\infty}}$.
    
Further, $G(x,y,\lambda)$, $G_{\mathfrak q}(x,y,\lambda)$, and $G_0(x,y,\lambda)$ stand for the Green functions of operators $\mathbb{L}-\lambda$, $\mathbb{L}_{\mathfrak q}-\lambda$ and $\mathbb{L}_0-\lambda$ respectively, see \cite{Neimark}, ch. I, $\mathsection 3$.
    
For arbitrary function $\Phi(\lambda)$ defined on the complex plane $\mathbb{C}$, we introduce the function $\tilde{\Phi}(z)$ by the formula
\begin{equation*}
	\tilde{\Phi}(z) = \Phi(\lambda),\quad \text{ where }\quad z = \lambda^{\frac{1}{n}}, \; Arg(z) \in [0,\frac{2\pi}{n}).
\end{equation*}

Note that the resolvent $\frac 1{\mathbb{L}-\lambda}$ is an integral operator with a kernel $G(x,y,\lambda)$. So one can define the trace
$$
\mathbf{Sp}\,\frac{1}{{\mathbb L} - \lambda} = \int\limits_{a}^b G(x,x,\lambda)\, dx.
$$ 

   Recall the definition of summation by mean-value method (Ces\`aro summation of order~$1$). Let $I_{\ell}$ be a sequence of partial sums corresponding to the series $\sum\limits_{j}a_j$. The series is called mean-value summable if the following limit exists:
\begin{equation*}
({\cal C},1)\,\text{-}\lim_{\ell\to \infty} I_{\ell}:=({\cal C},1)\,\text{-}\sum \limits_{j=1}^{\infty} a_j := \lim_{k \to \infty} \frac{1}{k}\sum \limits_{\ell=1}^k I_{\ell}.
\end{equation*}

All positive constants whose exact values are not important are denoted by $C$.

\section{Formulation of results}

Our main result for the second order operators reads as follows:

\begin{theorem}\label{basic-1}
Suppose that $n=2$ and that the distribution function of the measure ${\mathfrak q}\in \mathfrak M[a,b]$ is differentiable at points $a$ and $b$. Let the boundary conditions (\ref{Boundary_Conditions}) be regular. Then the following formula holds:
\begin{equation}\label{SL-SSh-general}
	\mathcal{S}({\mathfrak q}) = {\cal A}{\cal Q}'(a)+{\cal B}{\cal Q}'(b)-\frac{1}{8}\sum_j h_j^2.
\end{equation}
Here the series $\mathcal{S}({\mathfrak q})$ is mean-value summable, and
\begin{eqnarray*}
{\cal A}={\cal B}=-\frac 14 & \text{if} & d_0=d_1=0;\\
{\cal A}={\cal B}=\frac 14 & \text{if} & d_0=d_1=1;\\
{\cal A}=-{\cal B}=\frac 14\,\frac {a_1b_0-a_0b_1}{a_1b_0+a_0b_1} & \text{if} & d_0=0,\ d_1=1.
\end{eqnarray*}
\end{theorem}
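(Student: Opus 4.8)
The plan is to compute the regularized trace via a contour-integral representation of the eigenvalue counting, reducing everything to the asymptotics of the resolvent trace. The starting point is the classical identity that expresses the regularized trace through Green functions. The plan is to write

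\begin{align*}
\mathcal{S}({\mathfrak q}) = \frac{1}{2\pi i}\,({\cal C},1)\text{-}\lim_{R\to\infty}\oint_{|\lambda|=R}\lambda\,\Big[\mathbf{Sp}\,\tfrac{1}{\mathbb{L}_{\mathfrak q}-\lambda}-\mathbf{Sp}\,\tfrac{1}{\mathbb{L}-\lambda}\Big]\,d\lambda,
\end{align*}

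where the subtraction of the constant mean term is already absorbed by the normalization $\int {\mathfrak q}(dx)=0$. By the resolvent identity the bracket equals $-\mathbf{Sp}\big[\tfrac{1}{\mathbb{L}_{\mathfrak q}-\lambda}\,\mathbb{Q}\,\tfrac{1}{\mathbb{L}-\lambda}\big]$, which in kernel form is $-\int_{[a,b]}\int_a^b G_{\mathfrak q}(x,y,\lambda)G(y,x,\lambda)\,dy\,{\mathfrak q}(dx)$; iterating once more splits it into a first-order (linear in ${\mathfrak q}$) term and a remainder quadratic in ${\mathfrak q}$. First I would verify that the operators $\mathbb{L}$ and $\mathbb{L}_0$ give identical contributions to the trace formula, so that one may replace $G$ by $G_0$ throughout: the lower-order coefficients $p_k$ decay the resolvent difference fast enough along the mean-summing contours that they contribute nothing to $\mathcal{S}({\mathfrak q})$ at first order. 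This reduction to $\mathbb{L}_0$ is what makes the explicit $n=2$ computation tractable.

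**Next I would isolate the linear and nonlinear parts.** The linear-in-${\mathfrak q}$ contribution, after the contour integration, reduces to a local expression at the endpoints: only the behavior of the Green function $G_0(x,x,\lambda)$ near $x=a$ and $x=b$ survives the regularization, and this produces the boundary terms ${\cal A}{\cal Q}'(a)+{\cal B}{\cal Q}'(b)$. Here the hypothesis that the distribution function is differentiable at $a$ and $b$ is exactly what converts the measure-weighted boundary integral into the density values ${\cal Q}'(a)$, ${\cal Q}'(b)$. The coefficients ${\cal A}$, ${\cal B}$ are read off from the diagonal Green function of $\mathbb{L}_0$ for $n=2$, and the three cases in the theorem correspond to the three essentially different normalized regular boundary conditions (Dirichlet-type $d_0=d_1=0$, Neumann-type $d_0=d_1=1$, and the mixed case $d_0=0,d_1=1$); each yields a specific reflection coefficient in the boundary Green function, whose residue gives the stated value of ${\cal A}$ and ${\cal B}$. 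I expect the mixed case to produce the rational expression $\tfrac14\frac{a_1b_0-a_0b_1}{a_1b_0+a_0b_1}$ through the dependence of the Green function on the ratio of the boundary coefficients.

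**The nonlinear part is where the discrete atoms enter,** and this is the main obstacle. The quadratic-in-${\mathfrak q}$ term, $\int\int\int G_0\,G_0\,G_0\,{\mathfrak q}(dx){\mathfrak q}(dx')\,\dots$, is convergent and continuous in ${\mathfrak q}$ for the continuous part $\mathfrak c$, so it contributes nothing universal there; but for the discrete part $\mathfrak d=\sum_j h_j\delta(x-x_j)$ the coincidence of two atoms at the same interior point $x_j$ creates a genuinely nonlinear, non-integrable singularity whose regularization is delicate. I expect this to be the heart of the proof: one must show that each atom $h_j$ at an interior point contributes exactly $-\tfrac18 h_j^2$, independently of the location $x_j$, the boundary conditions, and the other atoms. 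The natural strategy is to reduce the interior-atom contribution to a model problem—a single $\delta$-interaction on the whole line or on a large segment—where the scattering off a point mass can be computed explicitly, and the universal constant $-\tfrac18$ emerges as the residue of an elementary integral. The Ces\`aro (mean-value) summation is essential precisely here: the quadratic atomic term is only conditionally summable, and the $({\cal C},1)$ method is what extracts the finite value $-\tfrac18\sum_j h_j^2$ from the oscillating partial sums. I would handle the interchange of summation and the contour limit with care, using the eigenvalue and eigenfunction asymptotics quoted in the Appendix to control the tails uniformly.
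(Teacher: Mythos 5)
Your outline reproduces the paper's skeleton: the contour/resolvent representation, replacement of $G$ by $G_0$ via the equiconvergence theorem, a linear term giving ${\cal A}{\cal Q}'(a)+{\cal B}{\cal Q}'(b)$, and a quadratic atomic term giving $-\frac18\sum_j h_j^2$; your model-problem heuristic for the universal constant is exactly what the paper's Theorem \ref{th3} makes precise (for interior $x$ the diagonal Green function agrees with the free one up to exponentially decaying errors, whence $\int_{|\lambda|=R^2}G_0(x,x,\lambda)^2\,d\lambda\to-\frac{\pi i}{2}$). However, two of your key claims are wrong or unsupported. First, you have inverted the role of the Ces\`aro summation. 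The quadratic atomic term converges as a genuine limit (Theorem \ref{th3} plus dominated convergence); no $({\cal C},1)$ method is needed there. It is the \emph{linear} term that is only mean-value summable: written spectrally it is $\sum_N\int_{[a,b]} y_N(x)\overline{z_N(x)}\,{\mathfrak q}(dx)$, and since there is no Riemann--Lebesgue lemma for measures, each atom contributes non-decaying oscillations of the type $h_j\cos(2\pi N x_j)$, whose partial sums diverge and only whose Ces\`aro means can be evaluated. Your statement that ``only the behavior of the Green function near $x=a$ and $x=b$ survives the regularization'' is an intuition, not a mechanism: to implement it the paper needs the Savchuk--Shkalikov summation result for $\sum_\ell\int\cos(2\pi\ell x)\,{\mathfrak q}(dx)$, explicit eigenfunction asymptotics in every boundary-condition case (including Jordan blocks and asymptotically close eigenvalues when the conditions are regular but not strongly regular), uniform bounds on partial sums to pass to the limit under the measure, and the pairwise-summation Lemma \ref{pairwise} relating the Ces\`aro limit along the radii $R_\ell$ of Theorem \ref{th2} to Ces\`aro summability of the original series. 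Nothing in your plan substitutes for this.

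Second, the splitting on which your whole plan rests --- linear part plus quadratic part plus negligible remainder --- is exactly the step that fails under naive estimates when $n=2$. The remainder after iterating the resolvent identity is cubic in $\mathbb{Q}$ and carries a factor of $\lambda$: on $|\lambda|=R^2$ each of the four Green-function factors in its kernel is only $O(R^{-1})$, so the integrand is $O(R^{-2})$ while the contour has length of order $R^2$, giving $O(1)$ rather than $o(1)$. The paper's Lemma \ref{I1} upgrades this to $o(1)$ only by combining the equiconvergence theorem, the pointwise decay of $R\,|\tilde G_0(x,y,Rw)|$ away from the diagonal and corners, a modulus-of-continuity argument for the continuous part $\mathfrak c$, and the hypothesis that all atoms lie in the open interval $(a,b)$. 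You do not flag this estimate as delicate at all, so as written your decomposition is unjustified precisely at the point where the case $n=2$ differs from $n\geqslant 3$ --- which is also the point that produces the nonlinear term you are trying to compute.
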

Thus, the nonlinear term in (\ref{SL-SSh-general}) {\bf does not depend} on boundary conditions while the coefficients of the linear term are completely determined by the boundary conditions.\medskip

For higher-order differential operators the perturbation considered is weak, and the dependency of the regularized trace on ${\mathfrak q}$ remains to be linear.

\begin{statement}
Suppose that $n\ge3$ and that ${\mathfrak q}\in \mathfrak M[a,b]$ is a measure subject to the conditions of Theorem \ref{basic-1}. Let the boundary conditions (\ref{Boundary_Conditions}) be regular. Then the following formula holds:
\begin{equation*}
	\mathcal{S}({\mathfrak q}) =\frac{{\cal Q}'(a)}{2n}\cdot\textbf{tr}\,(\mathbb A)+\frac{{\cal Q}'(b)}{2n}\cdot\textbf{tr}\,(\mathbb B).
\end{equation*}
Here the series $\mathcal{S}({\mathfrak q})$ are mean-value summable, and the matrices $\mathbb A$ and $\mathbb B$ are the same as in (\ref{traceNSZ}), see \cite[Theorem 2]{SZN}.
\end{statement}

This statement will be proved in full generality in a forthcoming paper. Here we prove Theorem  \ref{basic-1} and some auxiliary statements.

\begin{theorem}\label{th1}
	For every sequence $R = R_{\ell} \to \infty$ separated from $|\lambda_N^0|^{\frac{1}{n}}$ the following relations hold:
    \begin{enumerate}
\item if $n \geqslant 3$ then
	\begin{equation*}
\sum_{\lambda_N({\mathfrak q}),\lambda_N<R} \Big[\lambda_N({\mathfrak q})-\lambda_N\Big]
 = -\frac{1}{2 \pi i}\int\limits_{|\lambda|=R^n}\int\limits_{[a,b]} G_0(x,x,\lambda)\,{\mathfrak q}(dx)\,d\lambda + o(1);
	\end{equation*}
\item if $n = 2$ then
    \begin{eqnarray}\label{Splitting}
\sum_{\lambda_N({\mathfrak q}),\lambda_N<R} \Big[\lambda_N({\mathfrak q})-\lambda_N\Big]&=&
-\frac{1}{2 \pi i}\int\limits_{|\lambda|=R^2}\int\limits_{[a,b]} G_0(x,x,\lambda)\,{\mathfrak q}(dx)\,d\lambda 
\nonumber \\ 
&+& \frac{1}{4 \pi i}\sum_j h_j^2\int\limits_{|\lambda|=R^2}\! G_0(x_j,x_j,\lambda)^2\,d\lambda + o(1).
	\end{eqnarray}
\end{enumerate}
\end{theorem}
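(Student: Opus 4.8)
The plan is to turn the truncated sum of eigenvalue differences into a contour integral of the difference of resolvent traces, expand the latter in a Born series, and determine which orders of the expansion survive as $R\to\infty$. First I would use the residue theorem to write
\begin{equation*}
\sum_{\lambda_N(\mathfrak q),\lambda_N<R}\big[\lambda_N(\mathfrak q)-\lambda_N\big]
=-\frac{1}{2\pi i}\int\limits_{|\lambda|=R^n}\lambda\,\mathbf{Sp}\Big[\frac{1}{\mathbb L_{\mathfrak q}-\lambda}-\frac{1}{\mathbb L-\lambda}\Big]\,d\lambda ,
\end{equation*}
the weight $\lambda$ converting the simple poles of the resolvent traces at the points $\lambda_N(\mathfrak q)$ and $\lambda_N$ into the eigenvalues themselves, counted with algebraic multiplicity. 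The hypothesis that $R$ be separated from $|\lambda_N^0|^{\frac{1}{n}}$ is exactly what keeps the circle $|\lambda|=R^n$ away from both spectra and makes the two enclosed families pair up index by index, so that the left-hand side (the condition $\lambda_N<R$ meaning $|\lambda_N|^{\frac1n}<R$) is the correct partial sum.

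Next I would insert the Born (Neumann) expansion
\begin{equation*}
\frac{1}{\mathbb L_{\mathfrak q}-\lambda}-\frac{1}{\mathbb L-\lambda}
=\sum_{k\geq1}(-1)^k\Big(\frac{1}{\mathbb L-\lambda}\mathbb Q\Big)^{k}\frac{1}{\mathbb L-\lambda},
\end{equation*}
valid on the contour once $R$ is large, since $\mathfrak q$ is a finite measure and the resolvent is small there. Writing $T_k(\lambda):=\mathbf{Sp}\big[\big(\frac{1}{\mathbb L-\lambda}\mathbb Q\big)^{k}\big]$ and using cyclicity of the trace one gets $\mathbf{Sp}\big[\big(\frac{1}{\mathbb L-\lambda}\mathbb Q\big)^{k}\frac{1}{\mathbb L-\lambda}\big]=\frac1k\,T_k'(\lambda)$; integrating by parts in $\lambda$ over the closed contour (termwise, as each $T_k$ is single-valued meromorphic) then removes the weight $\lambda$ and yields
\begin{equation*}
\sum_{\lambda_N(\mathfrak q),\lambda_N<R}\big[\lambda_N(\mathfrak q)-\lambda_N\big]
=\frac{1}{2\pi i}\int\limits_{|\lambda|=R^n}\sum_{k\geq1}\frac{(-1)^k}{k}\,T_k(\lambda)\,d\lambda .
\end{equation*}
The $k=1$ term equals $-\frac{1}{2\pi i}\int_{|\lambda|=R^n}\int_{[a,b]}G(x,x,\lambda)\,\mathfrak q(dx)\,d\lambda$ and the $k=2$ term equals $\frac{1}{4\pi i}\int_{|\lambda|=R^n}\int\!\!\int G(x,y,\lambda)G(y,x,\lambda)\,\mathfrak q(dx)\,\mathfrak q(dy)\,d\lambda$, which already exhibits the structure of the asserted formula.

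It remains to estimate the tail. Using the uniform bound $G_0(x,y,\lambda)=O(|\lambda|^{\frac{1-n}{n}})$ on the contour (available because the separation condition keeps $|\lambda|=R^n$ off the eigenvalue rays) together with $\|\mathfrak q\|<\infty$, one bounds $\int_{|\lambda|=R^n}|T_k(\lambda)|\,|d\lambda|\leq C^k\|\mathfrak q\|^k R^{\,n-k(n-1)}$. For $k\geq3$ the exponent $n-k(n-1)\leq 3-2n<0$, so these terms are $o(1)$ for every $n\geq2$ and the series converges; for $n\geq3$ the same estimate with $k=2$ gives exponent $2-n<0$, killing the quadratic term as well and leaving only the $k=1$ term. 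For $n=2$ the exponent at $k=2$ equals $0$, so the quadratic term is genuinely $O(1)$; here I would split $\mathfrak q=\mathfrak c+\mathfrak d$ and decompose $\mathfrak q\otimes\mathfrak q$ into four pieces. The diagonal of the atomic part $\mathfrak d\otimes\mathfrak d$ produces exactly $\frac{1}{4\pi i}\sum_j h_j^2\int_{|\lambda|=R^2}G_0(x_j,x_j,\lambda)^2\,d\lambda$, while the off-diagonal atomic terms and every piece involving the continuous part $\mathfrak c$ are supported off the diagonal $\{x=y\}$, where $G_0(x,y,\lambda)^2$ oscillates and decays, so their contour integrals are $o(1)$ by a Riemann--Lebesgue type argument; the point is that a continuous measure charges the diagonal with zero mass. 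Finally I would replace $G$ by $G_0$ in the two surviving terms, the error being $o(1)$ because the lower-order coefficients $p_k\in L_1$ make $G-G_0$ decay faster along the contour.

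The main obstacle I expect is precisely this $n=2$ quadratic term: showing that the continuous and off-diagonal contributions to $\int\!\!\int G_0(x,y,\lambda)^2\,\mathfrak q(dx)\,\mathfrak q(dy)$ vanish after contour integration requires a careful non-atomicity and oscillation estimate in a neighbourhood of the diagonal, where the kernel is not small, rather than a crude modulus bound. Secondary technical points are the uniform resolvent bounds and the convergence of the Born series along the growing contour (both resting on the separation condition), the correct pairing of perturbed and unperturbed eigenvalues enclosed by $|\lambda|=R^n$, and the legitimacy of interchanging the summation over $k$ with the contour integral.
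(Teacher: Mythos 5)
Your overall strategy is essentially the one the paper follows: represent the truncated sum as a contour integral of the resolvent-trace difference, expand in powers of $\mathbb Q$, use cyclicity and integration by parts to remove the weight $\lambda$, show that orders $k\geqslant 3$ (and $k=2$ when $n\geqslant 3$) die by the uniform bound $|G|\leqslant CR^{1-n}$ on the contour, and for $n=2$ extract the atomic diagonal from the quadratic term. The paper does not use an infinite Neumann series; it starts from a finite exact identity of \cite{SZN} and iterates the Hilbert resolvent identity twice, so its remainders contain the perturbed resolvent $\frac{1}{\mathbb L_{\mathfrak q}-\lambda}$ and are estimated directly, with no need to justify termwise integration of an infinite series. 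That difference is mostly cosmetic (your series does converge in operator norm on $C[a,b]$ since $\|\frac{1}{\mathbb L-\lambda}\mathbb Q\|\leqslant C\|{\mathfrak q}\|R^{1-n}$), though note that all your estimates must be stated for the Green function $G$ of the full operator $\mathbb L$, not only for $G_0$; this is what the second part of Proposition~\ref{Estimates_G_0} (extended to $p_k\in\mathfrak M[a,b]$ in the Remark) provides.

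Two steps of your plan, however, would fail as literally stated. First, your claim that every contribution ``supported off the diagonal'' decays is false at the \emph{corners} of $[a,b]^2$: the decay (\ref{G0to0}) of $R^{n-1}\tilde G_0(x,y,Rw)$ holds only on compacta separated from the diagonal \emph{and} from the corners. For instance, with periodic-type boundary conditions $G_0(a,b,\lambda)$ is of the same order as the diagonal values, so an off-diagonal atomic pair sitting at $(a,b)$ would contribute an $O(1)$ amount. This is exactly where the standing hypothesis (differentiability of ${\cal Q}$ at $a$ and $b$, hence no atoms at the endpoints, $x_j\in(a,b)$) enters the paper's proof, and where the continuous part near the corners is killed by small mass (non-atomicity near the endpoints), not by oscillation; your proposal never invokes either mechanism. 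Second, the replacement of $G$ by $G_0$ cannot rest on the heuristic that lower-order coefficients make $G-G_0$ ``decay faster'': a pointwise gain of one power of $R$ integrated over a contour of length $\sim R^n$ yields only $O(1)$. The correct tool is the Tamarkin-type equiconvergence theorem (Proposition~\ref{Equiconv_G_G0}), which asserts that $\int_{|\lambda|=R^n}|(G_0-G)(x,y,\lambda)|\,|d\lambda|\to0$ uniformly in $x,y$ -- a genuine theorem of \cite{SZN}, not a crude decay estimate. Both gaps are repairable, but they are precisely the places where the paper has to work, so a complete write-up must supply the compact-exhaustion/$\varepsilon$-argument (small variation of $\mathfrak c$ on short intervals, compacta separated from the diagonal and the corners) and cite equiconvergence rather than a pointwise comparison.
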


\begin{theorem}\label{th2}
Suppose that $n=2$ and that ${\mathfrak q} \in \mathfrak M[a,b]$ is a measure subject to the conditions of Theorem \ref{basic-1}. Let $R = R_{\ell} \to \infty$ be a sequence separated from $|\lambda_N^0|^{\frac{1}{2}}$ such that:
\begin{enumerate}
\item if the boundary conditions (\ref{Boundary_Conditions}) are strongly regular (see, e.g., \cite{Shk}) then for $\ell$ large enough there is exactly one value $|\lambda_N^0|^{\frac{1}{2}}$ between $R_{\ell}$ and $R_{\ell+1}$;
\item if the boundary conditions (\ref{Boundary_Conditions}) are regular, but not strongly regular, then for $\ell$ large enough there is exactly one pair of values $|\lambda_N^0|^{\frac{1}{2}}$ between $R_{\ell}$ and $R_{\ell+1}$.
\end{enumerate}
 
Then
\begin{equation}\label{C-lim}
-\frac{1}{2 \pi i}\cdot({\cal C},1)\text{-}\lim \int\limits_{|\lambda|=R^n}\int\limits_{[a,b]}G_0(x,x,\lambda)\,{\mathfrak q}(dx)\,d\lambda = {\cal A}{\cal Q}'(a)+{\cal B}{\cal Q}'(b),
\end{equation}
where ${\cal A}$ and ${\cal B}$ are the same as in Theorem \ref{basic-1}.
\end{theorem}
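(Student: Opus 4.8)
The plan is to compute the left side of (\ref{C-lim}) explicitly for $n=2$ and read off the three cases. Write $z=\lambda^{1/2}$ with $\mathrm{Arg}(z)\in[0,\pi)$ and set $L=b-a$. Since $\ell_0=-D^2$, the solutions of $\ell_0y=\lambda y$ are $e^{\pm izx}$, and the cofactor construction of \cite{Neimark} builds $G_0$ from the $2\times2$ characteristic determinant
\begin{equation*}
\Delta(z)=\det\Big(P_j(\sigma iz)\,e^{\sigma iza}+Q_j(\sigma iz)\,e^{\sigma izb}\Big)_{j=0,1;\ \sigma=\pm},
\end{equation*}
whose zeros are the points $(\lambda_N^0)^{1/2}$. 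On the diagonal one gets
\begin{equation*}
G_0(x,x,\lambda)=\frac{i}{2z}+\frac{M_+(z)\,e^{2izx}+M_-(z)\,e^{-2izx}+M_0(z)}{\Delta(z)},
\end{equation*}
with $M_\pm,M_0$ explicit in $P_j(\pm iz),Q_j(\pm iz)$. First I would discard the $x$-independent terms $\frac{i}{2z}$ and $M_0/\Delta$: by the normalization $\int_{[a,b]}{\mathfrak q}(dx)=0$ they vanish inside $\int_{[a,b]}G_0(x,x,\lambda)\,{\mathfrak q}(dx)$. On the contour $z=Re^{i\theta/2}$ one has $\mathrm{Im}\,z>0$ for $\theta\in(0,2\pi)$, so $e^{2izx}$ concentrates at $x=a$ and $e^{-2izx}$ at $x=b$.

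I would write the near-$a$ piece as $\frac{i}{2z}\rho_a(z)\,e^{2iz(x-a)}$ with $\rho_a(z)=-2iz\,e^{2iza}M_+(z)/\Delta(z)$, and symmetrically $\rho_b(z)=-2iz\,e^{-2izb}M_-(z)/\Delta(z)$ at $b$. Since ${\cal Q}$ is differentiable at the endpoints, near $a$ one may replace ${\mathfrak q}(dx)$ by ${\cal Q}'(a)\,dx$, and the elementary integral $\int_a^be^{2iz(x-a)}\,dx=\frac{e^{2izL}-1}{2iz}\to-\frac1{2iz}$ yields
\begin{equation*}
\int_{[a,b]}G_0(x,x,\lambda)\,{\mathfrak q}(dx)=-\frac{\rho_a(z)\,{\cal Q}'(a)+\rho_b(z)\,{\cal Q}'(b)}{4\lambda}+(\text{remainder}).
\end{equation*}
Because the radii are separated from $(\lambda_N^0)^{1/2}$, the determinant $|\Delta(z)|$ is bounded below on $|\lambda|=R^2$, so $\rho_a(z),\rho_b(z)$ are bounded there and converge, as $\mathrm{Im}\,z\to\infty$, to constants $\rho_a,\rho_b$. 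Inserting the main term into $-\frac1{2\pi i}\int_{|\lambda|=R^2}(\cdots)\,d\lambda$ and using $\int_{|\lambda|=R^2}\frac{d\lambda}\lambda=2\pi i$ gives $\frac{\rho_a}4{\cal Q}'(a)+\frac{\rho_b}4{\cal Q}'(b)$; hence ${\cal A}=\rho_a/4$ and ${\cal B}=\rho_b/4$.

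It remains to evaluate $\rho_a,\rho_b$. Using $P_j(iz)\sim a_j(iz)^{d_j}$ and $Q_j(iz)\sim b_j(iz)^{d_j}$, one finds $\Delta(z)\sim(iz)^{d_0+d_1}$ times $2i(a_1b_0-a_0b_1)\sin(zL)$ (up to sign) when $d_0=d_1$, and times $-2\big((a_0a_1+b_0b_1)+(a_0b_1+a_1b_0)\cos(zL)\big)$ when $d_0=0,\,d_1=1$. Computing $M_\pm$ to matching order and letting $\mathrm{Im}\,z\to+\infty$ (where $e^{-izL}$ dominates) gives $\rho_a=\rho_b=-1$ for $d_0=d_1=0$, $\rho_a=\rho_b=+1$ for $d_0=d_1=1$, and $\rho_a=-\rho_b=\frac{a_1b_0-a_0b_1}{a_1b_0+a_0b_1}$ in the mixed case. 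These are exactly the constants ${\cal A},{\cal B}$ of Theorem \ref{basic-1}; as a check, $4{\cal A}=\mathbf{tr}\,\mathbb A$ and $4{\cal B}=\mathbf{tr}\,\mathbb B$ agrees with the continuous-part coefficients of (\ref{traceNSZ}).

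The main obstacle is the $({\cal C},1)$-limit, which is forced by the discrete part of ${\mathfrak q}$. A jump $h_j$ at an interior point $x_j$ contributes to the inner integral the localized term $\frac{i}{2z}h_j\big(\rho_a e^{2iz(x_j-a)}+\rho_b e^{2iz(b-x_j)}\big)$; this is exponentially small in the bulk of the contour, but the arc near the real axis produces, after $\int_{|\lambda|=R^2}d\lambda$, terms of size $O(1)$ proportional to $h_j\,e^{\pm2iR(x_j-a)}$ and $h_j\,e^{\pm2iR(b-x_j)}$. For $x_j\in(a,b)$ these oscillate in $R=R_\ell$ and do not converge, but their Cesàro mean is zero; the analogous remainders from the continuous part and from the $e^{2izL}$ tails are already $o(1)$. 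This is where the hypotheses on the radii are essential: the spacing conditions — one value $(\lambda_N^0)^{1/2}$ per gap in the strongly regular case, one pair in the merely regular case — make $R_\ell$ grow regularly so that each phase averages out, and they simultaneously account for the paired zeros of $\Delta$ in the non-strongly-regular regime and keep $|\Delta(z)|$ bounded below on the contours. The delicate points are the uniform lower bound for $|\Delta|$, the exchange of the sum over $j$ (controlled by $\sum_j|h_j|<\infty$) with the Cesàro averaging, and the verification that all remainders vanish in the mean.
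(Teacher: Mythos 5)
Your route is genuinely different from the paper's: you analyze the contour integral directly through the explicit exponential structure of $G_0$ for $n=2$ and read off the endpoint constants from the determinant asymptotics, whereas the paper converts the integral by the residue theorem into partial sums of the eigenfunction expansion (formula (\ref{summa})) and then performs a case-by-case Ces\`aro analysis of eigenfunction asymptotics based on Proposition \ref{cosinusy}. Your main-term computation is plausible and does recover the constants ${\cal A},{\cal B}$ of Theorem \ref{basic-1}. However, there is a genuine gap, located exactly where the real difficulty of the theorem sits. The claim that ``the analogous remainders from the continuous part \dots are already $o(1)$'' is false: on the arcs of $|z|=R$ near the real axis the factor $e^{2iz(x-a)}$ does not decay, so the interior part of ${\mathfrak c}$ enters through $\int_{[a,b]}e^{2iz(x-a)}\,{\mathfrak c}(dx)$, a Fourier--Stieltjes transform which for a continuous but singular measure (Cantor type) does not tend to zero; these arcs therefore produce $O(1)$ oscillating contributions from the continuous part exactly as they do from the atoms. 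Consequently, once the endpoint main terms are removed, \emph{everything} that remains (continuous part, atoms, and the error of replacing ${\mathfrak q}(dx)$ by ${\cal Q}'(a)\,dx$ near $a$) survives only in Ces\`aro mean, and the entire weight of the proof rests on showing those means vanish -- which your proposal asserts rather than proves.

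Moreover, the mechanism you offer for that vanishing is unsound. The hypothesis on $R_\ell$ only fixes in which gap between consecutive values $|\lambda_N^0|^{\frac12}$ each radius lies; inside its gap $R_\ell$ is arbitrary, so phases such as $e^{2iR_\ell(x_j-a)}$ need not average out. Concretely, for the Dirichlet problem on $[0,\pi]$ with $x_j=\frac{\pi}{2}$, the sequence $R_\ell=\ell+\frac{1}{4}$ for even $\ell$ and $R_\ell=\ell+\frac{3}{4}$ for odd $\ell$ is admissible, yet the Ces\`aro mean of $e^{i\pi R_\ell}$ equals $\cos\frac{\pi}{4}\neq 0$. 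The theorem is nevertheless true because the contour integral is $-2\pi i$ times the sum of residues at the enclosed poles: it is piecewise constant in $R$ and depends only on \emph{how many} eigenvalues are enclosed -- this counting is what the interlacing hypothesis really encodes. Your approximations (treating $\rho_a(z)$ as a constant, extracting phases $e^{2iR(x_j-a)}$) are valid only where $\mathrm{Im}\,z$ is large; near the real axis $\rho_a(z)$ oscillates through the $e^{izL}$ terms of $\Delta(z)$, and it is precisely those arcs that carry the interior contributions. Since your error terms are genuinely $R$-dependent rather than piecewise constant, their Ces\`aro means are sensitive to the placement of $R_\ell$ within the gaps, and the argument cannot be closed in this form. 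Repairing it forces one back to the residue/counting picture, i.e.\ to the partial sums of $\sum_N\int y_N(x)\overline{z_N(x)}\,{\mathfrak q}(dx)$, Fej\'er-type lemmas such as Proposition \ref{cosinusy}, and the pairwise grouping needed for Jordan blocks and asymptotically close eigenvalues -- which is precisely the paper's proof.
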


\begin{theorem}\label{th3}
  Suppose that $n=2$ and that $x\ne a,b$. Then for every sequence $R = R_{\ell} \to \infty$ separated from $|\lambda_N^0|^{\frac{1}{2}}$ we have
\begin{equation*}
	\lim_{R\to \infty}\int\limits_{|\lambda|=R^2}G_0(x,x,\lambda)^2\,d\lambda = -\frac{\pi i}{2}.
\end{equation*}
\end{theorem}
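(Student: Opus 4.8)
The plan is to reduce the statement to the explicit high-energy behaviour of the diagonal Green function and then evaluate the contour integral by a residue computation, controlling the remainder by dominated convergence. For $n=2$ we have $\ell_0=(-i)^2D^2=-D^2$, so $G_0(x,y,\lambda)$ is the Green function of $-D^2-\lambda$ on $[a,b]$ subject to the regular boundary conditions. Writing $z=\sqrt\lambda$ with $Arg(z)\in[0,\pi)$ (the branch fixed by the convention for $\tilde\Phi$) we have $\mathrm{Im}\,z\ge0$, and the whole-line free Green function of $-D^2-\lambda$ equals $\frac{i}{2z}e^{iz|x-y|}$, whose diagonal value is $\frac{i}{2z}$. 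First I would record the elementary but crucial fact that, for $x$ in the interior, the boundary contributes only reflected waves, so that
\begin{equation*}
G_0(x,x,\lambda)=\frac{i}{2z}\,\bigl(1+\varepsilon(x,z)\bigr),
\end{equation*}
where $\varepsilon$ is a combination of terms $e^{2iz(x-a)}$, $e^{2iz(b-x)}$, $e^{2iz(b-a)}$ (and their products) divided by the characteristic determinant $\Delta(z)$ of the regular problem. In the model Dirichlet case $G_0(x,x,\lambda)=\frac{\sin(z(x-a))\sin(z(b-x))}{z\sin(z(b-a))}$, and a short computation gives $\varepsilon(x,z)=i\cot(z(b-a))-i\,\frac{\cos(z(2x-a-b))}{\sin(z(b-a))}-1$; the same structure persists for arbitrary regular boundary conditions through Neimark's construction of $G_0$.

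Next I would square and integrate. Since $\bigl(\tfrac{i}{2z}\bigr)^2=-\tfrac1{4\lambda}$,
\begin{equation*}
G_0(x,x,\lambda)^2=-\frac{1}{4\lambda}\bigl(1+\varepsilon(x,z)\bigr)^2=-\frac{1}{4\lambda}-\frac{1}{4\lambda}\bigl(2\varepsilon+\varepsilon^2\bigr).
\end{equation*}
The leading term integrates exactly and independently of $R$, namely $-\frac14\int_{|\lambda|=R^2}\frac{d\lambda}{\lambda}=-\frac14\cdot2\pi i=-\frac{\pi i}{2}$, which is the claimed value. It therefore remains to show the remainder tends to $0$. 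Parametrising $\lambda=R^2e^{i\theta}$, $\theta\in[0,2\pi)$, so that $d\lambda/\lambda=i\,d\theta$ and $z=Re^{i\theta/2}$, the remainder equals
\begin{equation*}
-\frac{i}{4}\int\limits_0^{2\pi}\Bigl(2\varepsilon(x,Re^{i\theta/2})+\varepsilon(x,Re^{i\theta/2})^2\Bigr)\,d\theta.
\end{equation*}

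I would finish by Lebesgue's dominated convergence theorem in $\theta$. For every fixed $\theta\in(0,2\pi)$ one has $\mathrm{Im}\,z=R\sin(\theta/2)\to\infty$, and since $x-a>0$ and $b-x>0$ the reflected contributions decay: one checks $\varepsilon(x,Re^{i\theta/2})=O\bigl(e^{-c\,\mathrm{Im}\,z}\bigr)$ with $c=c(x)>0$ (for the Dirichlet model $c=(b-a)-|2x-a-b|>0$ because $x$ is interior), so $\varepsilon\to0$ pointwise. Simultaneously, the hypothesis that $R$ is separated from $|\lambda_N^0|^{1/2}$ keeps $z=Re^{i\theta/2}$ at a fixed positive distance from the real zeros of $\Delta$, which for a Birkhoff regular problem yields a lower bound $|\Delta(z)|\ge c(\delta)>0$ on the whole contour, uniformly along the sequence; together with the obvious upper bound on the numerators this provides a uniform dominating constant $|\varepsilon|\le C$. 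Dominated convergence then forces the remainder to $0$, and the limit equals $-\frac{\pi i}{2}$.

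The main obstacle is exactly this uniform control near the real axis: as $\theta\to0^+$ or $\theta\to2\pi^-$ the point $z$ approaches $\mathbb R$, $\mathrm{Im}\,z\to0$, the reflected terms cease to decay, and $\varepsilon$ is merely $O(1)$ rather than small, so the pointwise decay is useless there. What saves the argument is that this failure occurs only at the two endpoints (a set of measure zero), while the separation condition still supplies a uniform-in-$R$ bound on $\varepsilon$ via the lower bound on $|\Delta(z)|$. Establishing that lower bound on the sequence of contours $|\lambda|=R_\ell^2$ — i.e. invoking the asymptotics of the characteristic determinant of a regular problem (Neimark, ch.~II) away from its zeros — is the one step that needs genuine input beyond elementary estimates.
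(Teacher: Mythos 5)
Your proposal is correct and follows essentially the same route as the paper: both extract the leading diagonal term $\frac{i}{2z}$ of $G_0(x,x,\lambda)$ from its representation via the characteristic determinant, observe that its square integrates exactly to $-\frac{\pi i}{2}$ (the paper writes this as $-\int \frac{dz}{2z}$ over the semicircle after substituting $\lambda=z^2$, you as $-\frac14\int\frac{d\lambda}{\lambda}$), and kill the reflected-wave remainder by dominated convergence, using exponential decay in the open upper half-plane for $x\in(a,b)$ together with the lower bound on $|\hat\Delta(z)|$ coming from regularity and the separation of $R$ from $|\lambda_N^0|^{1/2}$. The only cosmetic difference is that the paper quotes the explicit determinant formula from \cite{SZN} where you argue from the "free plus reflected" structure and the Dirichlet model, but the decomposition, the key input, and the limiting argument coincide.
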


\section{Proof of Theorem \ref{th1}}

We use some statements obtained in \cite{SZN}. The first statement generalizes the Tamarkin equiconvergence Theorem \cite{T}, the second one provides estimates of the Green functions.

\begin{proposition}\label{Equiconv_G_G0} (\cite[Theorem 1]{SZN}) For every sequence $R = R_{\ell} \to \infty$ separated from $|\lambda_N^0|^{\frac{1}{n}}$ the relation 
\begin{equation*}
	\int \limits _{|\lambda| = R^n}{\left|(G_0 - G)(x,y,\lambda)\right||d\lambda|}\to 0
\end{equation*}
holds uniformly in  $x, y \in [a,b]$.
\end{proposition}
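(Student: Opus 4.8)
The plan is to compare the two resolvents through the lower-order perturbation $\mathbb{P}:=\mathbb{L}-\mathbb{L}_0=\sum_{k=0}^{n-2}p_k(x)D^k$ and to reduce the claim to a quantitative bound of approximate-identity type. First I would write the second resolvent identity
\[
\frac{1}{\mathbb{L}-\lambda}-\frac{1}{\mathbb{L}_0-\lambda}
=-\frac{1}{\mathbb{L}_0-\lambda}\,\mathbb{P}\,\frac{1}{\mathbb{L}-\lambda},
\]
which at the level of kernels reads
\[
(G-G_0)(x,y,\lambda)=-\int\limits_a^b G_0(x,t,\lambda)\sum_{k=0}^{n-2}p_k(t)\,(D_t^kG)(t,y,\lambda)\,dt.
\]
Thus the whole difference is controlled once one has uniform bounds for $G_0$ and for the derivatives $D_t^kG$ along the contours $|\lambda|=R^n$.

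Second, I would invoke the classical Birkhoff asymptotics for regular boundary conditions (Naimark, ch. II, $\mathsection4$), which give, with $z=\lambda^{\frac1n}$, $|z|=R$ separated from $\{|\lambda^0_N|^{\frac1n}\}$, and $\theta=Arg(\lambda)$,
\[
|(D_x^jG_0)(x,t,\lambda)|\le C\,R^{\,j-n+1}\,e^{-\gamma(\theta)R|x-t|},
\]
together with the same bound for $G$, which has the same principal part $\ell_0$ and the same regular boundary conditions; here $\gamma(\theta)\ge0$ and $\gamma(\theta)>0$ off finitely many rays. Since $k\le n-2$, each factor $D_t^kG$ loses exactly one power of $R$ relative to $G_0$, so combining the two kernel bounds gives
\[
|(G-G_0)(x,y,\lambda)|\le C\,R^{-n}\int\limits_a^b g(t)\,e^{-\gamma(\theta)R(|x-t|+|t-y|)}\,dt,\qquad g:=\sum_{k=0}^{n-2}|p_k|\in L_1(a,b).
\]

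Third, integrating over $|\lambda|=R^n$ (where $|d\lambda|=R^n\,d\theta$) the factor $R^{-n}$ cancels the length of the contour, and the statement reduces to proving
\[
\sup_{x,y\in[a,b]}\ \int\limits_a^b g(t)\,\Phi_R\big(|x-t|+|t-y|\big)\,dt\ \longrightarrow\ 0,\qquad
\Phi_R(s):=\int\limits_0^{2\pi}e^{-\gamma(\theta)Rs}\,d\theta.
\]
The kernel $\Phi_R$ is bounded by $2\pi$, is nonincreasing in $s$, and for each fixed $s>0$ tends to $0$ by dominated convergence, since $\gamma(\theta)>0$ for a.e. $\theta$. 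I would split the $t$-integral at the set $\{|x-t|+|t-y|<\delta\}$, which is contained in an interval of length $\le2\delta$ about $x$. On the far part $\Phi_R\le\Phi_R(\delta)\to0$, controlled uniformly by $\|g\|_{L_1}$; on the near part I bound $\Phi_R\le2\pi$ and use absolute continuity of the integral of $g$ — crucially this is uniform in $x,y$, because it depends only on the small Lebesgue measure of the set and not on its location. Choosing first $\delta$ small and then $R$ large yields the uniform smallness, which is exactly the assertion.

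The main obstacle is precisely this uniformity near the diagonal $x=y$: there the exponential gain in the kernel estimates degenerates, while the coefficients $p_k$ are merely $L_1$, so no pointwise control is available. The resolution above — absolute continuity handling the concentrated part and the $\theta$-averaged decay $\Phi_R(\delta)\to0$ handling the remainder — is what upgrades the limit to a uniform one; one must also keep $R$ separated from $\{|\lambda_N^0|^{\frac1n}\}$ so that the resolvent, and hence the kernel estimates, stay bounded uniformly along the whole contour. If one prefers not to assume the estimate for $G$ itself, the same scheme applies term by term to the Neumann series $G-G_0=\sum_{m\ge1}(-1)^m(\frac{1}{\mathbb{L}_0-\lambda}\mathbb{P})^m\frac{1}{\mathbb{L}_0-\lambda}$, where each extra factor of $\mathbb{P}$ contributes a gain $O(R^{-1})$ making the series geometrically summable for large $R$.
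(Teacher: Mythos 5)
First, note that the paper does not prove this proposition at all: it is imported verbatim as \cite[Theorem 1]{SZN}, so the only meaningful comparison is with the proof in that source. Your scheme --- second resolvent identity through $\mathbb{P}=\mathbb{L}-\mathbb{L}_0$, kernel bounds of order $R^{\,j-n+1}$ for $j$-th derivatives, a Neumann-series bootstrap to transfer the estimates from $G_0$ to $G$ (which, for $p_k\in L_1$ only, is indeed the legitimate route, so your closing remark is not optional but necessary), and the split of the $t$-integral into a far part killed by the $\theta$-averaged decay and a near part killed by the uniform absolute continuity of $\int g$ --- is the same circle of ideas as in \cite{SZN}. However, one step is false as stated: the pointwise bound $|G_0(x,t,\lambda)|\leqslant CR^{1-n}e^{-\gamma(\theta)R|x-t|}$ does not hold for general regular boundary conditions. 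The Green function contains, besides terms decaying in $|x-t|$, in $(x-a)+(t-a)$ and in $(b-x)+(b-t)$ (the latter two are harmless, being dominated by $e^{-\gamma R|x-t|}$ via the triangle inequality), also \emph{cross-corner} exponentials of the type $e^{izw((x-a)+(b-t))}$, which are $O(1)$ near the corners $(a,b)$ and $(b,a)$ even though $|x-t|\approx b-a$ there. A concrete counterexample is the periodic problem on $[0,1]$, where $\tilde G_0(x,y,z)=\cos\bigl(z(|x-y|-\tfrac12)\bigr)/\bigl(2z\sin\tfrac z2\bigr)$, so that $R\,|\tilde G_0|$ stays bounded away from zero along $|x-y|=1$. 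This is precisely why the estimate quoted in Proposition \ref{Estimates_G_0} requires compact sets separated from the \emph{corners} of $[a,b]^2$ in addition to the diagonal; your proposal silently drops the corner restriction, and since the claimed uniformity in $x,y$ must include the corners, the gap is genuine --- and it occurs exactly in the regular-but-not-strongly-regular cases (periodic-type conditions) that the rest of the paper needs.

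The repair is routine and does not change your architecture: replace $|x-t|$ by $d(x,t)=\min\bigl\{|x-t|,\,(x-a)+(b-t),\,(b-x)+(t-a)\bigr\}$, the distance on the circle obtained by identifying $a$ with $b$. The corrected bound $|G_0(x,t,\lambda)|\leqslant CR^{1-n}e^{-\gamma(\theta)R\,d(x,t)}$ does hold for regular conditions; $d$ is still a metric, so the chaining $d(x,t_1)+\dots+d(t_m,y)\geqslant d(x,y)$ keeps every term of the Neumann series under control with the same geometric gain $O(R^{-1}\|g\|_{L_1})$ per factor of $\mathbb{P}$; and the sublevel set $\{t:\,d(x,t)+d(t,y)<\delta\}$ is contained in at most two arcs of total length $\leqslant C\delta$, so your key observation --- that absolute continuity of $\int g$ depends only on the Lebesgue measure of the exceptional set and not on its location --- applies unchanged. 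With this substitution the ``first $\delta$ small, then $R$ large'' argument closes, and your quantitative variant then matches the qualitative dominated-convergence argument of \cite{SZN} (uniform boundedness of $R^{n-1-j}|\tilde G^{(j)}|$ plus a.e.\ decay of $R^{n-1}\tilde G_0$ off the diagonal and corners).
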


\begin{proposition}\label{Estimates_G_0} (\cite[Lemma 1 and (22)]{SZN})
Put 
$$
\Gamma_1=\Big\{w=e^{i\phi}:\phi\in\Big(0,\,\frac{\pi}{n}\Big)\Big\}; \qquad 
\Gamma_2=\Big\{w=e^{i\phi}:\phi\in\Big(\frac{\pi}{n},\,\frac{2\pi}{n}\Big)\Big\}.
$$
Then for every $x \in [a,b]$ 
\begin{equation}\label{G0to0}
R^{n-1}\cdot|\tilde{G_0}(x,y,Rw)| \to 0
\end{equation}
for almost all $y\in[a,b]$ and almost all $w\in\Gamma_1\cup\Gamma_2$. Moreover, the convergence is uniform on the set ${\cal K}\times {\cal J}$ for every compact set ${\cal K} \subset [a,b]^2$, separated from corners and diagonal $\{x=y\}$ and every compact set ${\cal J}\subset\Gamma_1\cup\Gamma_2$.

Further, assume that all coefficients $p_k$, $k=0,\dots,n-2$, in the differential expression (\ref{Diff_Equation_P}) belong to the space $\mathfrak M[a,b]$. Then for every sequence $R = R_{\ell} \to \infty$ separated from $|\lambda_N^0|^{\frac{1}{n}}$ and for all $j=0,\dots,n-1$ the functions 
$$
R^{n-1-j}\cdot|(\tilde{G})_x^{(j)}(x,y,Rw)|
$$
are uniformly bounded in $[a,b]^2\times(\Gamma_1\cup\Gamma_2)$. 
\end{proposition}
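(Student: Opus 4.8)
The plan is to derive both assertions from the classical Birkhoff asymptotic theory of the fundamental system together with the explicit determinantal representation of the Green function. First I would fix the sectorial variable: writing $z=\lambda^{1/n}$ with $\mathrm{Arg}(z)\in[0,\frac{2\pi}{n})$ as in the paper, the equation $\ell_0 y=\lambda y$ reduces to $y^{(n)}=(iz)^n y$, so that $y_k(x)=e^{iz\omega_k x}$, $\omega_k=e^{2\pi i k/n}$, $k=0,\dots,n-1$, is a fundamental system. On each of the open arcs $\Gamma_1,\Gamma_2$ the real parts $\mathrm{Re}(iw\omega_k)$ are nonzero and of fixed sign, which splits the indices $k$ into ``growing'' and ``decaying'' exponentials; the boundary rays $\phi=m\pi/n$ (the Stokes lines), the diagonal $\{x=y\}$ and the corners are exactly the loci where this dichotomy degenerates, and they will form the exceptional sets in the statement.

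Next I would write $G_0$ as a ratio $G_0(x,y,\lambda)=\Delta(x,y,\lambda)/\Delta_0(\lambda)$, where $\Delta_0(\lambda)=\det\big(U_j(y_k)\big)_{j,k}$ is the characteristic determinant whose zeros are the $\lambda_N^0$, and $\Delta$ is the determinant built from the Cauchy kernel of $\ell_0-\lambda$ and the boundary forms $U_j$. The decisive input is Birkhoff regularity: it guarantees that the leading coefficient in the sectorial asymptotics of $\Delta_0$ is nonzero, so that for $z=Rw$ with $R$ separated from the $|\lambda_N^0|^{1/n}$ one has a lower bound $|\Delta_0(\lambda)|\ge c\,|E(z)|$, where $E(z)$ is the product of the dominant exponentials. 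Estimating the numerator $\Delta$ by the same dominant exponentials and using that the Cauchy kernel carries a prefactor $|z|^{-(n-1)}$, I obtain that on the diagonal $R^{n-1}|\tilde{G_0}(x,x,Rw)|=O(1)$, while off the diagonal the surviving exponential factor $e^{iRw\omega_k(x-y)}$ decays, which yields $R^{n-1}|\tilde{G_0}(x,y,Rw)|\to0$ for $x\ne y$ and $w$ off the Stokes rays. Fubini then gives the almost-everywhere statement, and uniformity on compacts separated from the diagonal, corners and Stokes rays follows because the lower bound on $\Delta_0$ and the exponential gap are uniform there.

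For the second assertion I would pass from $\ell_0$ to $\ell$ by the resolvent identity
\begin{equation*}
G(x,y,\lambda)=G_0(x,y,\lambda)-\sum_{k=0}^{n-2}\int\limits_{[a,b]} G_0(x,t,\lambda)\,\big(D_t^{k}G\big)(t,y,\lambda)\,p_k(dt).
\end{equation*}
Differentiating $j$ times in $x$ costs each $y_k(x)$ a factor $iz\omega_k=O(R)$, so from the free estimate one first gets that $R^{n-1-j}|(\tilde{G_0})_x^{(j)}|$ is uniformly bounded on the whole closed sectors $[a,b]^2\times(\Gamma_1\cup\Gamma_2)$ (the jump of $D_x^{n-1}G_0$ across the diagonal is harmless for boundedness). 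One then runs this representation as a Neumann series: applying $(\mathbb L_0-\lambda)^{-1}$ gains a factor $O(R^{-1})$ in the relevant norm, so for $R$ large the series converges and the bounds for $G_0$ and its derivatives transfer to $G$ and its derivatives, the measures being absorbed through their total variations $\|p_k\|$.

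The main obstacle will be the second assertion with measure-valued coefficients $p_k$: the integrals $\int G_0(x,t)\,(D_t^kG)(t,y)\,p_k(dt)$ must be controlled pointwise, so one needs uniform pointwise bounds on $D_t^kG$ rather than merely integral bounds, and the atoms of $p_k$ make this a genuinely self-referential estimate. I expect to resolve it by a fixed-point/bootstrap argument in the Banach space of kernels equipped with the scaled sup-norms $\sup R^{n-1-j}|(\tilde{G})_x^{(j)}|$, using the smallness of the resolvent of $\mathbb L_0$ for large $R$ to obtain a contraction; note that the inner derivatives entering the integral go only up to order $n-2$, a strict subset of the orders $j\le n-1$ being estimated, which is what allows the iteration to close. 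The sectorial lower bound on $\Delta_0$ coming from Birkhoff regularity is precisely what makes this smallness uniform in $w$ away from the Stokes rays.
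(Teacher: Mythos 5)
A preliminary point: the paper does not actually prove this proposition. Both assertions are imported verbatim from \cite[Lemma 1 and (22)]{SZN}, and the only argument the paper itself supplies is the Remark following the statement, namely that the proof of the second assertion given in \cite{SZN} for $p_k\in L_1(a,b)$ runs without changes for $p_k\in\mathfrak M[a,b]$. So your proposal cannot be compared against an internal proof; it has to be judged as a reconstruction of the cited one, and as such it is essentially sound and uses the same standard machinery: the determinantal representation of $G_0$ together with the Birkhoff-regularity lower bound on the characteristic determinant for the first assertion, and a perturbation of the resolvent kernel for the second. Your structural observations are the right ones — the prefactor $|z|^{-(n-1)}$ carried by the Cauchy kernel; the dichotomy of exponentials off the Stokes rays, with the diagonal and the corners as exactly the loci where decay fails; the fact that the inner derivatives in the resolvent identity stop at order $n-2$, which produces the $O(R^{-1})$ gain per iteration and lets the fixed-point argument in the scaled sup-norms close; and the fact that the coefficients enter only through their total variations $\|p_k\|$. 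This last point is precisely what substantiates the paper's Remark, so your second paragraph can be read as the missing justification of that Remark.

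Two refinements are needed to obtain the statement exactly as written. First, the first assertion carries no separation hypothesis on $R$ (that hypothesis appears only in the second assertion), whereas your lower bound $|\Delta_0(\lambda)|\geqslant c\,|E(z)|$ is invoked for $R$ separated from $|\lambda_N^0|^{\frac 1n}$. To remove it you need the standard supplementary fact that the zeros of the characteristic determinant in the $z$-plane cluster only along the rays $\phi=0$, $\phi=\frac{\pi}{n}$, $\phi=\frac{2\pi}{n}$; hence for $w$ in a fixed compact subset ${\cal J}$ of the open arcs (and a fortiori for each fixed $w\in\Gamma_1\cup\Gamma_2$) the lower bound holds for all sufficiently large $R$ with no condition on $R$ whatsoever, which is what the almost-everywhere and uniform-on-${\cal K}\times{\cal J}$ formulations require. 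Second, when the $p_k$ have atoms, $D_x^{n-1}\tilde G$ has jumps at those atoms (as well as across the diagonal), so the asserted uniform bound for $j=n-1$ must be read as a bound on one-sided limits, i.e.\ an essential supremum; likewise, differentiation under the integral sign in your iterated kernels at order $j=n-1$ must be handled with this convention. Neither point affects the substance of your argument, but both should be stated to match the proposition's hypotheses and conclusion precisely.
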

\begin{remark}
The second part of this statement is proved in \cite{SZN} for $p_k\in L_1(a,b)$. However, the proof runs without changes for $p_k\in\mathfrak M[a,b]$. 
\end{remark}

\begin{proof}[Proof of Theorem \ref{th1}]

We will start from the relation (see \cite[(24), (25)]{SZN}):
\begin{eqnarray*}
&&4 \pi i\sum_{\lambda_N({\mathfrak q}),\lambda_N<R} \Big[\lambda_N({\mathfrak q})-\lambda_N\Big] \\
&& = -\int\limits_{|\lambda|=R^n}\lambda\,\mathbf{Sp}\left(\left(\frac{1}{\mathbb{L}-\lambda}\,\mathbb{Q}\,\frac{1}{\mathbb{L}_{\mathfrak q}-\lambda}\right)\mathbb{Q}\left(\frac{1}{\mathbb{L}-\lambda}\,\mathbb{Q}\,\frac{1}{\mathbb{L}_{\mathfrak q} -\lambda}\right)\right)d\lambda \\
&& + \int\limits_{|\lambda|=R^n}\lambda\,\mathbf{Sp}\left(\frac{1}{\mathbb{L}-\lambda}\,\mathbb{Q}\,\frac{1}{\mathbb{L}-\lambda}+\frac{1}{\mathbb{L}_{\mathfrak q}-\lambda}\,\mathbb{Q}\,\frac{1}{\mathbb{L}_{\mathfrak q}-\lambda}\right)d\lambda =: -I_1(R)+I_2(R).
\end{eqnarray*}
      
\begin{lemma}\label{I1}
Under assumptions of Theorem \ref{th1} we have $I_1(R)=o(1)$ as $R\to\infty$.
\end{lemma}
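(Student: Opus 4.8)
\emph{The plan} is to exploit that the operator under the trace is cubic in $\mathbb Q$: since the resolvent identity gives $\frac{1}{\mathbb L-\lambda}\,\mathbb Q\,\frac{1}{\mathbb L_{\mathfrak q}-\lambda}=\frac{1}{\mathbb L-\lambda}-\frac{1}{\mathbb L_{\mathfrak q}-\lambda}$, the factor in $I_1(R)$ is a product of two such differences around a central $\mathbb Q$, hence governed by four copies of the Green function, while the contour $|\lambda|=R^n$ carries only two powers of $R$. Writing the trace as an iterated integral,
\begin{equation*}
\mathbf{Sp}\Big(\tfrac{1}{\mathbb L-\lambda}\mathbb Q\tfrac{1}{\mathbb L_{\mathfrak q}-\lambda}\mathbb Q\tfrac{1}{\mathbb L-\lambda}\mathbb Q\tfrac{1}{\mathbb L_{\mathfrak q}-\lambda}\Big)=\int_{[a,b]}\int_{[a,b]^3}G(x,x_1)G_{\mathfrak q}(x_1,x_2)G(x_2,x_3)G_{\mathfrak q}(x_3,x)\,{\mathfrak q}(dx_1){\mathfrak q}(dx_2){\mathfrak q}(dx_3)\,dx,
\end{equation*}
a product of four Green functions integrated three times against ${\mathfrak q}$ and once (the trace variable $x$) against Lebesgue measure. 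By the uniform bound of Proposition \ref{Estimates_G_0} — which applies to $G_{\mathfrak q}$ as well, by the Remark, since $\mathbb L_{\mathfrak q}$ differs from $\mathbb L$ only by the measure coefficient ${\mathfrak q}$ — each factor is $O(R^{-(n-1)})$ uniformly in its arguments on $|\lambda|=R^n$.

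For $n\ge3$ this is already enough. Bounding the four factors by $CR^{-(n-1)}$, the three ${\mathfrak q}$-integrals by $\|{\mathfrak q}\|^3$ and the $x$-integral by $b-a$, and inserting $|\lambda|=R^n$ together with the contour length $2\pi R^n$, I obtain
\[
|I_1(R)|\le C\,R^{n}\cdot R^{n}\cdot\big(R^{-(n-1)}\big)^4=C\,R^{\,4-2n}=o(1).
\]

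For $n=2$ the same crude estimate only yields $O(1)$, so one power of $R$ must be recovered, and this is the heart of the matter. I would integrate the free variable $x$ first. Since $x$ occurs exactly in the first and last Green factors,
\[
\int_a^b G(x,x_1,\lambda)\,G_{\mathfrak q}(x_3,x,\lambda)\,dx=\Big(\tfrac{1}{\mathbb L_{\mathfrak q}-\lambda}\,\tfrac{1}{\mathbb L-\lambda}\Big)(x_3,x_1,\lambda)
\]
is the kernel of a product of two resolvents, which the Cauchy--Schwarz inequality bounds by $\|G(\cdot,x_1,\lambda)\|_{L_2}\,\|G_{\mathfrak q}(x_3,\cdot,\lambda)\|_{L_2}$. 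The decisive ingredient is then the Hilbert--Schmidt-type estimate
\[
\int_{|\lambda|=R^2}\sup_{s\in[a,b]}\|G(\cdot,s,\lambda)\|_{L_2}^2\,|d\lambda|\le C\,\frac{\log R}{R},
\]
and likewise for $G_{\mathfrak q}$. This reflects the exponential spatial decay $|G(x,s,\lambda)|\le \frac{C}{|z|}e^{-c\,\mathrm{Im}(z)|x-s|}$ of the second-order Green function — a standard bound available from its explicit construction, or from the eigenfunction asymptotics of the Appendix — whose decay rate $\mathrm{Im}(z)\asymp R\sin\!\big(\tfrac12\operatorname{Arg}\lambda\big)$ supplies the extra factor $R^{-1}$ (the $\log R$ coming from the diagonal $x=s$). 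Bounding the two remaining factors $G_{\mathfrak q}(x_1,x_2),G(x_2,x_3)$ by $CR^{-1}$, integrating the three copies of ${\mathfrak q}$, and applying Cauchy--Schwarz in $\lambda$, I arrive at $|I_1(R)|\le C\|{\mathfrak q}\|^3\,R^{-1}\log R=o(1)$.

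The hard part is precisely this borderline gain for $n=2$, and two points demand care. First, because ${\mathfrak q}$ carries a discrete part $\mathfrak d$, the inner estimate must hold \emph{uniformly} in the base points $x_1,x_3$, which range over $\operatorname{supp}{\mathfrak q}$ and in particular over its atoms $x_j$; the merely almost-everywhere off-diagonal decay of Proposition \ref{Estimates_G_0} is therefore insufficient, and one must invoke the uniform bound above (this is why I keep $\sup_{s}$ inside the contour integral). Second, the Green-function estimates degenerate as $\lambda$ approaches the positive semi-axis where the eigenvalues accumulate, i.e. $\operatorname{Arg}z\to0,\pi$ for $n=2$, where the decay rate $\mathrm{Im}(z)$ vanishes; there I would split off an angular window of width $\asymp 1/R$ around these directions, control it by the crude bound alone — the separation of $R$ from $|\lambda^0_N|^{1/2}$ keeping the resolvents bounded — and verify that this exceptional window contributes only $O(R^{-1})$.
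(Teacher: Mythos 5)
Your kernel representation and your treatment of $n\ge 3$ coincide with the paper's own proof, and for $n=2$ your plan — integrate the trace variable $x$ first, use Cauchy--Schwarz, and recover the missing power of $R$ from an $L_2$ (Hilbert--Schmidt type) bound on the Green functions over the contour — is a genuinely different route from the paper's. The paper instead keeps the crude bound $C/R$ on the three inner factors, replaces the remaining factor $G(x,y,\lambda)$ by $G_0$ via the equiconvergence Proposition \ref{Equiconv_G_G0}, splits ${\mathfrak q}={\mathfrak c}+{\mathfrak d}$, and finishes with an $\varepsilon$--$\delta$ continuity argument for ${\mathfrak c}$ together with the a.e.\ pointwise decay (\ref{G0to0}) and dominated convergence for the interior atoms. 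However, as written your argument has a genuine gap precisely at what you call the decisive ingredient: you need the exponential decay $|G(x,s,\lambda)|\le \frac{C}{|z|}e^{-c\,\mathrm{Im}(z)|x-s|}$, hence the estimate $\int_{|\lambda|=R^2}\sup_s\|G(\cdot,s,\lambda)\|_{L_2}^2\,|d\lambda|\le C\log R/R$, not for $G_0$ but for $G$ and $G_{\mathfrak q}$, i.e.\ for operators with an $L_1$ coefficient $p_0$ and an additional measure potential. For these operators no such bound is available in the paper's toolkit: Proposition \ref{Estimates_G_0} (extended to measure coefficients by the Remark) gives only the rate-free uniform bound $R\,|\tilde G|\le C$, while its decay statement (\ref{G0to0}) and the explicit construction and Appendix asymptotics you invoke concern only the unperturbed $\mathbb{L}_0$. (Even for $G_0$ the pointwise bound as you state it is false for some regular but not strongly regular conditions, e.g.\ periodic ones, where the decay is only in the wrap-around distance $\min(|x-s|,\,(b-a)-|x-s|)$; this does not harm the $L_2$ estimate, but it shows the claim is not off-the-shelf.)

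The gap is fillable while staying close to your scheme, and this is what separates a complete proof from an assertion. One clean repair: prove the $L_2$ bound for the explicit $G_0$, then bootstrap it to $G$ and then to $G_{\mathfrak q}$ through the resolvent identity in kernel form, $G_{\mathfrak q}(t,x,\lambda)=G(t,x,\lambda)-\int G(t,u,\lambda)\,G_{\mathfrak q}(u,x,\lambda)\,{\mathfrak q}(du)$, which together with the crude bound $|G(t,u,\lambda)|\le C/R$ gives $\sup_t\|G_{\mathfrak q}(t,\cdot,\lambda)\|_{L_2}\le\sup_t\|G(t,\cdot,\lambda)\|_{L_2}+\frac{C\|{\mathfrak q}\|}{R}\sup_u\|G_{\mathfrak q}(u,\cdot,\lambda)\|_{L_2}$ and hence $\sup_t\|G_{\mathfrak q}(t,\cdot,\lambda)\|_{L_2}\le 2\sup_t\|G(t,\cdot,\lambda)\|_{L_2}$ for large $R$ (and analogously from $G_0$ to $G$, perturbing by $p_0$). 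Alternatively one can first swap the two outer factors for $G_0$ at a cost of $o(1)$ using equiconvergence — notably, your proposal never uses Proposition \ref{Equiconv_G_G0} at all, whereas it is exactly the tool that lets the paper work only with the explicitly known $G_0$. Your two points of care — that the atoms force uniformity in the base points, and that a $1/R$-angular window near the real axis must be handled by the crude bound alone — are both correct and correctly resolved.
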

\begin{proof}
We rewrite $I_1(R)$ in terms of the Green functions:
\begin{multline*}
	I_1(R)= \int\limits_{|\lambda|=R^n} \int\limits_{\vphantom{[}a}^b \int\limits_{[a,b]}\lambda\, G(x,y,\lambda)\\
\times\int\limits_{[a,b]}\int\limits_{[a,b]} G_{\mathfrak q}(y,s,\lambda)\,G(s,t,\lambda)\,G_{\mathfrak q}(t,x,\lambda){\mathfrak q}(dt){\mathfrak q}(ds){\mathfrak q}(dy)\,dx\,d\lambda.
\end{multline*}

Let $n \geqslant 3$. Then the estimate from Proposition \ref{Estimates_G_0} implies
$$
|I_1(R)|\leqslant R^{2n}\|{\mathfrak q}\|^3\,\frac{C}{R^{4(n-1)}} = o(1).
$$
For $n = 2$ the proof is more complicated. From the same estimate we obtain for $x,y\in[a,b]$
\begin{equation}\label{ThreeFour}
\bigg|\lambda\int\limits_{[a,b]}\int\limits_{[a,b]}G_{\mathfrak q}(y,s,\lambda)\,G(s,t,\lambda)\,G_{\mathfrak q}(t,x,\lambda){\mathfrak q}(dt){\mathfrak q}(ds) \bigg| \leqslant \|{\mathfrak q}\|^2\,\frac{C}{R},
\end{equation}
whence
\begin{eqnarray*}
|I_1(R)|&\leqslant& \|{\mathfrak q}\|^2\,\frac{C}{R}\int\limits_{|\lambda|=R^2}\int\limits_{\vphantom{[}a}^b \int\limits_{[a,b]}|G(x,y,\lambda)|\,|{\mathfrak q}|(dy)\,dx\,|d\lambda| \\
&\stackrel{*}{=}&\|{\mathfrak q}\|^2\,\frac{C}{R}\int\limits_{|\lambda|=R^2}\int\limits_{\vphantom{[}a}^b \int\limits_{[a,b]}|G_0(x,y,\lambda)|\,|{\mathfrak q}|(dy)\,dx\,|d\lambda| +o(1) \\
&\leqslant& \|{\mathfrak q}\|^2\, C\int\limits_{\Gamma_1\cup\Gamma_2}\int\limits_{\vphantom{[}a}^b\int\limits_{[a,b]}R\cdot|\tilde{G}_0(x,y,Rw)|\,|{\mathfrak c}|(dy)\,dx\,|dw| \\
&+&\sum_j\|{\mathfrak q}\|^2\,C\,|h_j| \int\limits_{\Gamma_1\cup\Gamma_2}\int \limits_a^b 
R\cdot|\tilde{G}_0(x,x_j,Rw)|\,dx\,|dw|+o(1) \\
&=:&I_{11}(R)+I_{12}(R)+o(1)
\end{eqnarray*}
(the relation $*$ follows from Proposition \ref{Equiconv_G_G0}). 

Due to the estimates from Proposition \ref{Estimates_G_0} the integrand in $I_{12}(R)$ is bounded uniformly in $j$. Moreover, under assumptions of Theorem 2.3 the measure ${\mathfrak q}$ has no atoms at the endpoints of the segment, i.e. $x_j\in(a,b)$. Hence the relation (\ref{G0to0}) and the Lebesgue Dominated Convergence Theorem imply $I_{12}(R)=o(1)$. 

Now we estimate $I_{11}(R)$. Since ${\mathfrak c}$ is continuous, for every $\varepsilon > 0$ there exists $\delta > 0$ such that for every segment with the length less than $\delta$, the total variation of ${\mathfrak c}$ on this segment does not exceed $\varepsilon$. We choose a compact set ${\cal K} \subset [a,b]^2$ separated from corners and diagonal $\{x=y\}$ so that for every $x\in[a,b]$ the set ${\cal K}_x = \{y\in[a,b]:(x,y) \notin {\cal K}\}$ is a conjunction of three or less intervals with the length less than $\delta$. Also we choose a compact set ${\cal J} \subset \Gamma_1\cup\Gamma_2$ so that the measure of $(\Gamma_1\cup\Gamma_2) \setminus {\cal J}$ does not exceed $\varepsilon$.

The integral over ${\cal K}\times {\cal J}$ tends to zero as $R\to \infty$ by Proposition \ref{Estimates_G_0}. The integral over the remainder set can be estimated by $C\varepsilon$. 

Thus, $|I_1(R)|\leqslant C\varepsilon + o(1)$. Since $\varepsilon$ is arbitrarily small, the statement follows.
\end{proof}

We continue the proof of Theorem \ref{th1}. Using the relation $\mathbf{Sp}(ABC)=\mathbf{Sp}(BCA)$ and integrating by parts, we rewrite $I_2(R)$ as follows:
\begin{eqnarray*}
I_2(R) &=& \int\limits_{|\lambda|=R^n}\lambda\,\mathbf{Sp}\left(\frac{1}{\mathbb{L}-\lambda}\,\mathbb{Q}\,\frac{1}{\mathbb{L}-\lambda}+\frac{1}{\mathbb{L}_{\mathfrak q} -\lambda}\,\mathbb{Q}\,\frac{1}{\mathbb{L}_{\mathfrak q}-\lambda}\right)d\lambda \\
&=& \int\limits_{|\lambda|=R^n}\mathbf{Sp}\left(\left(\frac{\lambda}{(\mathbb{L}-\lambda)^2}+\frac{\lambda}{(\mathbb{L}_{\mathfrak q}-\lambda)^2}\right)\mathbb{Q}\right)d\lambda \\
&=& -\int\limits_{|\lambda|=R^n}\mathbf{Sp}\left(\left(\frac{1}{\mathbb{L}-\lambda}+\frac{1}{\mathbb{L}_{\mathfrak q}-\lambda}\right)\mathbb{Q}\right)d\lambda.
\end{eqnarray*}

We apply the Hilbert resolvent identity to the second term and obtain
\begin{equation}\label{I2}
I_2(R) = -2\int\limits_{|\lambda|=R^n}\mathbf{Sp}\left(\frac{1}{\mathbb{L}-\lambda}\,\mathbb{Q}\right)d\lambda+\int\limits_{|\lambda|=R^n}\mathbf{Sp}\left(\frac{1}{\mathbb{L}-\lambda}\,\mathbb{Q}\,\frac{1}{\mathbb{L}_{\mathfrak q}-\lambda}\,\mathbb{Q}\right)d\lambda.
\end{equation}
	By the estimate from Proposition \ref{Estimates_G_0}, for $n \geqslant 3$ the second term in (\ref{I2}) is bounded by $CR^{2-n}=o(1)$. We rewrite the first term in terms of the Green function and obtain
\begin{equation*}
	I_2(R) = -2 \int\limits_{|\lambda|=R^n} \int\limits_{[a,b]} G(x,x,\lambda){\mathfrak q}(dx)\,d\lambda+o(1)= -2 \int\limits_{|\lambda|=R^n} \int\limits_{[a,b]} G_0(x,x,\lambda){\mathfrak q}(dx)\,d\lambda+o(1)
\end{equation*}
(the last equality follows from Proposition \ref{Equiconv_G_G0}). This equality and Lemma \ref{I1} give the first statement of Theorem.\medskip

If $n=2$ we apply the Hilbert resolvent identity to the second term in (\ref{I2}) and obtain
\begin{equation*}
I_2(R) = \int\limits_{|\lambda|=R^2}\mathbf{Sp}\left(\frac{-2}{\mathbb{L}-\lambda}\,\mathbb{Q}+\left(\frac{1}{\mathbb{L}-\lambda}\,\mathbb{Q}\right)^2-\left(\frac{1}{\mathbb{L}-\lambda}\,\mathbb{Q}\right)^2\frac{1}{\mathbb{L}_{\mathfrak q}-\lambda}\,\mathbb{Q}\right)d\lambda.
\end{equation*}
By the estimate from Proposition \ref{Estimates_G_0}, the last term here is bounded by $CR^{-1}=o(1)$. We rewrite the remaining terms via the Green function and replace $G$ by $G_0$ similarly to case $n \geqslant 3$. Thus we arrive at
\begin{eqnarray}\label{I3}
	I_2(R) &=& -2 \int\limits_{|\lambda|=R^2} \int\limits_{[a,b]} G_0(x,x,\lambda){\mathfrak q}(dx)\,d\lambda
    \nonumber\\
&+& \int\limits_{|\lambda| = R^2} \int\limits_{[a,b]} \int\limits_{[a,b]} G_0(x,y,\lambda)\,G_0(y,x,\lambda){\mathfrak q}(dy){\mathfrak q}(dx)\,d\lambda
+o(1).
\end{eqnarray}

It remains to simplify the second term in (\ref{I3}). We denote it by $I_3(R)$ and rewrite as follows:
\begin{eqnarray*}\label{I31}
	I_3(R) &=& \int\limits_{|\lambda| = R^2} \int\limits_{[a,b]} \int\limits_{[a,b]} G_0(x,y,\lambda)\,G_0(y,x,\lambda){\mathfrak c}(dy)({\mathfrak c}(dx) +2{\mathfrak d}(dx))\,d\lambda     \nonumber\\
&+& \int\limits_{|\lambda| = R^2} \int\limits_{[a,b]} \int\limits_{[a,b]} G_0(x,y,\lambda)\,G_0(y,x,\lambda){\mathfrak d}(dy){\mathfrak d}(dx)\,d\lambda =:
I_{31}(R)+I_{32}(R). 
\end{eqnarray*}

The integral $I_{31}(R)$ can be estimated in the same way as $I_{11}(R)$. This gives $|I_{31}(R)|\leqslant C\varepsilon + o(1)$ for any $\varepsilon>0$.

Further, we have
\begin{equation*}
I_{32}(R) = \sum_{j,k} h_kh_j
\int\limits_{\Gamma_1\cup\Gamma_2}\! 2R^2\cdot \tilde{G}_0(x_j,x_k,Rw)
\,\tilde{G}_0(x_k,x_j,Rw)\,dw.
\end{equation*}
By (\ref{G0to0}), all terms with $j\ne k$ tend to zero as $R\to\infty$. Using the Lebesgue Theorem we obtain 
\begin{equation*}
I_{32}(R) = \sum_{j} h_j^2
\int\limits_{\Gamma_1\cup\Gamma_2}\! 2R^2\cdot \tilde{G}_0(x_j,x_j,Rw)^2\,dw + o(1) =
\sum_{j} h_j^2 \int\limits_{|\lambda| = R^2} G_0(x_j,x_j,\lambda)^2\,d\lambda+ o(1).
\end{equation*}
This relation, formula (\ref{I3}) and estimates of $I_1$ and $I_{31}$ give us (\ref{Splitting}).
\end{proof}

 \section{Proof of Theorem \ref{th2}}

Changing variables we can assume $a=0$, $b=1$. Since formula (\ref{C-lim}) is known for smooth $\mathfrak q$,  it is sufficient to prove Theorem in the case ${\cal Q}'(0)={\cal Q}'(1)=0$. Moreover, we can impose some additional orthogonality conditions on ${\mathfrak q}$. This notice will be used later.

We also need the following statement, which is a particular case of \cite[Лемма 1]{SSh}.

\begin{proposition}\label{cosinusy}
Suppose that the measure ${\mathfrak q} \in \mathfrak M[0,1]$ satisfies assumptions of Theorem \ref{basic-1} and that ${\cal Q}'(0)={\cal Q}'(1)=0$. Then
$$
({\cal C},1)\text{-}\sum \limits_{\ell=1}^{\infty}\, \int\limits_{[0,1]} \cos(2\pi\ell x)\,{\mathfrak q}(dx)=0. 
$$
\end{proposition}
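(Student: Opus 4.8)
The plan is to identify these Ces\`aro means with integrals of $\mathfrak q$ against the Fej\'er kernel. Put $a_\ell=\int_{[0,1]}\cos(2\pi\ell x)\,\mathfrak q(dx)$ and let $\sigma_k$ denote the $k$-th Cesàro mean from the definition. Reversing the order of summation gives
\begin{equation*}
\sigma_k=\frac1k\sum_{\ell=1}^k\sum_{j=1}^\ell a_j=\sum_{\ell=1}^k\frac{k-\ell+1}{k}\,a_\ell=\int_{[0,1]}K_k(x)\,\mathfrak q(dx),\qquad K_k(x):=\sum_{\ell=1}^k\frac{k-\ell+1}{k}\cos(2\pi\ell x).
\end{equation*}
A direct computation shows that this averaged cosine kernel is, up to a constant, the Fej\'er kernel: since $F_k(x)=\frac1{k+1}\bigl(\frac{\sin(\pi(k+1)x)}{\sin(\pi x)}\bigr)^2=1+2\sum_{\ell=1}^k\bigl(1-\frac{\ell}{k+1}\bigr)\cos(2\pi\ell x)$, one has $K_k=\frac{k+1}{k}\cdot\frac{F_k-1}{2}$. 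Because $\int_{[0,1]}\mathfrak q(dx)=0$ (the standing normalization), the constant term drops out and
\begin{equation*}
\sigma_k=\frac{k+1}{2k}\int_{[0,1]}F_k(x)\,\mathfrak q(dx).
\end{equation*}
Thus everything reduces to proving $\int_{[0,1]}F_k\,\mathfrak q(dx)\to0$ as $k\to\infty$.

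For this I would regard $\mathfrak q$ as a measure on the circle $\mathbb T=\mathbb R/\mathbb Z$, on which $F_k$ is a nonnegative approximate identity concentrated at the single wrapped point $0\equiv1$. I would split $[0,1]$ into an interior part $[\eta,1-\eta]$ and the two endpoint zones. On the interior the uniform bound $F_k(x)\le\frac1{(k+1)\sin^2(\pi\eta)}$ forces $\int_{[\eta,1-\eta]}F_k\,\mathfrak q(dx)\to0$ for each fixed $\eta$, disposing at once of the whole interior of the segment and of all atoms $x_j\in(0,1)$. The two endpoint zones together form one neighbourhood of $0\equiv1$ on $\mathbb T$, and there the local behaviour of $\mathfrak q$ is governed by the hypotheses: since $\mathfrak q$ has no atom at $0$ or $1$ and ${\cal Q}$ is differentiable there, the symmetric derivative of $\mathfrak q$ at $0\equiv1$ equals $\tfrac12({\cal Q}'(0)+{\cal Q}'(1))=0$. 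The classical theorem on $(\mathcal C,1)$ summability of Fourier--Stieltjes series --- Fej\'er means of a finite measure converge at every point where the distribution function is differentiable, and to that derivative (see Zygmund, \emph{Trigonometric Series}, Ch.~III) --- then yields $\int_{[0,1]}F_k\,\mathfrak q(dx)\to0$, and hence $\sigma_k\to0$.

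The main obstacle is precisely this last convergence at the endpoint. It cannot be obtained by dominated convergence, because the condition ${\cal Q}'(0)={\cal Q}'(1)=0$ controls only the \emph{signed} cumulative mass, ${\cal Q}(x)-{\cal Q}(0)=o(x)$, whereas the total variation of $\mathfrak q$ may carry positive density at the endpoints (oscillating densities such as $\sin(1/x)$ are admissible), and the Fej\'er peak, of height $k+1$, is not uniformly integrable against $|\mathfrak q|$. The convergence therefore rests on the cancellation built into the kernel, and a careful self-contained argument would split the endpoint zone at the scale $x\sim\frac1{k+1}$ and exploit the oscillation of $F_k'$ rather than estimate $\int x\,|F_k'(x)|\,dx$, which only grows like $\log k$. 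Since the measure is complex-valued one applies the scalar statement to its real and imaginary parts, all intermediate bounds being taken in absolute value.
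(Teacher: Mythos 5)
Your proposal is correct, but it is a genuinely different route from the paper's, for the simple reason that the paper does not prove Proposition \ref{cosinusy} at all: it is quoted as a particular case of Lemma 1 of Savchuk and Shkalikov \cite{SSh}. You instead reconstruct a proof, and the reductions are sound: the rewriting $\sigma_k=\sum_{\ell=1}^k\frac{k-\ell+1}{k}\,a_\ell$, the identity $K_k=\frac{k+1}{k}\cdot\frac{F_k-1}{2}$, and the use of the standing normalization $\int_{[0,1]}\mathfrak q(dx)=0$ to remove the constant term (this normalization is genuinely needed --- without it the Ces\`aro sum equals $-\frac12\int_{[0,1]}\mathfrak q(dx)$ --- and you correctly identify it as an assumption inherited from the Introduction). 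The wrapping onto the circle is also handled correctly: differentiability of $\mathcal Q$ at $0$ and $1$ excludes atoms there, and the wrapped distribution function has one-sided derivatives $\mathcal Q'(0)$ and $\mathcal Q'(1)$ at the glue point, so it is actually differentiable there with derivative $0$; hence the classical theorem you invoke (Zygmund, Ch.~III, on $(\mathcal C,1)$ summability of Fourier--Stieltjes series at points of differentiability) applies in its plain form, with no need of the symmetric-derivative refinement. Your closing caveat is accurately placed: the endpoint step is exactly the content of \cite[Lemma 1]{SSh}, and your diagnosis that crude bounds fail because $\int x\,|F_k'(x)|\,dx\asymp\log k$ is right; if you wanted to avoid citing the $(\mathcal C,1)$ statement directly, a clean self-contained substitute is Fatou's theorem for Abel--Poisson means (where $\int|t|\,|P_r'(t)|\,dt=O(1)$, so the naive integration by parts does work) followed by the Hardy--Littlewood Tauberian theorem, the terms of the series being bounded by $2\|\mathfrak q\|$. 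In short, the paper buys brevity by citing a tailor-made lemma; your argument buys transparency --- it shows exactly why only $\mathcal Q'(0)$, $\mathcal Q'(1)$ and the absence of endpoint atoms matter --- at the price of leaning on a textbook theorem at the decisive point.
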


\begin{proof}[Proof of Theorem \ref{th2}]
Expanding the Green function in a neighborhood of a pole, see \cite[ch. I, $\mathsection 3$]{Neimark}, and using the residue theorem we rewrite the integral in (\ref{C-lim}) as follows:
\begin{equation}\label{summa}
-\frac{1}{2 \pi i}\int\limits_{|\lambda|=R^n}\int\limits_{[0,1]}G_0(x,x,\lambda)\,{\mathfrak q}(dx)\,d\lambda=\sum\limits_{|\lambda_N^0| < R^2}\, \int\limits_{[0,1]} y_N(x) \overline{z_N(x)}\,{\mathfrak q}(dx).
\end{equation}
Here $y_N$ and $z_N$ denote the eigenfunctions of operators $\mathbb{L}_0$ and $\mathbb{L}_0^*$ corresponding to the eigenvalues $\lambda_N^0$ and $\overline{\lambda_N^0}$ respectively and normalized as follows:
$$
\langle y_N, z_N \rangle:=\int\limits_0^1 y_N(x)\overline{z_N(x)}\,dx = 1.
$$
If the eigenvalue $\lambda_N^0=\lambda_{N+1}^0$ corresponds to a two-dimensional Jordan block (in this case the same is true for $\overline{\lambda_N^0}=\overline{\lambda_{N+1}^0}$), then the term $y_N(x) \overline{z_N(x)}$ in the right-hand side of (\ref{summa}) should be replaced by
$$
y_N(x) \overline{\widehat z_{N+1}(x)}+
\widehat y_{N+1}(x) \overline{z_N(x)}.
$$
Here $\widehat y_{N+1}$ and $\widehat z_{N+1}$ stand for the adjoined functions of $\mathbb{L}_0$ and $\mathbb{L}_0^*$ in these Jordan blocks, and the normalization condition has the form
\begin{equation}\label{prisoed}
\langle y_N, z_N \rangle = \langle \widehat y_{N+1}, \widehat z_{N+1} \rangle = 0; \qquad
\langle y_N, \widehat z_{N+1} \rangle = \langle \widehat y_{N+1}, z_N \rangle = 1.
\end{equation}
Thus we need to justify the passage to the limit in the sense of mean-value in the right-hand side of (\ref{summa}). We consider several cases.

\subsection*{The case $d_0=d_1=0$ (the Dirichlet boundary conditions)\footnote{As we mentioned in the Introduction this case was considered in the paper \cite{SSh}.}}
This case is the simplest technically. The system of boundary conditions (\ref{Boundary_Conditions}) can be reduced to the form
$$  
y(0) = 0,\qquad y(1) = 0.
$$
The operator $-D^2$ with these boundary conditions is selfadjoint, its eigenvalues and eigen\-functions are as follows: 
$$
\lambda_N^0 = (\pi N)^2,\qquad   y_N(x) = z_N(x) = C\sin(\pi N x),
\qquad N\in\mathbb{N}.
$$
Taking into account the normalization condition we have
$$
y_N(x)\overline{z_N(x)} = 1-\cos(2 \pi N x).
$$ 
The constant vanishes after integration in view of the assumption $\int\limits_{[a,b]} {\mathfrak q}(dx)=0$ while the cosine disappears after passage to the limit due to Proposition \ref{cosinusy}. Thus formula (\ref{C-lim}) is proved.

\subsection*{The case $d_0=d_1=1$}

In this case the system (\ref{Boundary_Conditions}) can be rewritten as follows:
\begin{equation*}
\left\{
\begin{array}{lr}
  y'(0) + c_0 y(0) + f_0 y(1) = 0,\\
  y'(1) + c_1 y(0) + f_1 y(1) = 0,
\end{array}
\right.
\end{equation*}
and the boundary conditions of the adjoint operator have the form
\begin{equation*}
\left\{
\begin{array}{lr}
  z'(0) + \overline{c}_0 z(0) - \overline{c}_1 z(1) = 0,\\
  z'(1) - \overline{f}_0 z(0) + \overline{f}_1 z(1) = 0.
\end{array}
\right.
\end{equation*}

Using the algorithm in \cite[Ch. II, $\mathsection 4$]{Neimark} we write down the asymptotic expansions for eigenvalues and eigenfunctions up to $O(N^{-2})$, 
see Appendix, part 1. Taking into account the normalization condition we obtain
\begin{eqnarray}\label{Neumann}
	y_{N+1}(x)\overline{z_{N+1}(x)} &=& 1+\cos(2 \pi N x) -2\,\frac{\sin(2 \pi N x)}{\pi N}\,(c_0 (1-x)+ f_1 x) 
    \nonumber\\ 
    &+& (-1)^N\,\frac{\sin(2 \pi N x)}{\pi N}\,(c_1-f_0)(1-2 x) + O(N^{-2}).
\end{eqnarray}
Similarly to the Dirichlet case, the first two terms of this expansion disappear after integration and passage to the limit in (\ref{summa}). The other terms in (\ref{Neumann}) generate the series converging at every point of the segment $[0,1]$. Moreover, the partial sums of this series are uniformly bounded. Denote the sum of this series by $g(x)$. Then the Lebesgue Theorem gives
$$
({\cal C},1)\,\text{-}\sum\limits_{N=1}^{\infty}\, \int\limits_{[0,1]} y_N(x) \overline{z_N(x)}\,{\mathfrak q}(dx)=
\int\limits_{[0,1]} g(x)\,{\mathfrak q}(dx).
$$
  But we already know that for smooth ${\mathfrak q}$ the left-hand side of this equality equals zero. This implies $g(x)=0$ for a.e. $x\in[0,1]$.

It remains to notice that according to well-known formula \cite[5.4.2.9]{Prudnikov}, after summation the third term in (\ref{Neumann}) gives a function continuous except maybe for the endpoints of the segment. Next, by
\cite[5.4.2.10]{Prudnikov} the fourth term gives after summation a continuous function (the discontinuity at the point $\frac 12$ disappears because of the factor $1-2x$). The remainder term also gives a continuous function. Therefore $g$ can differ from zero only at points $0$ and $1$. However, by the assumptions imposed on ${\mathfrak q}$ these points do not contribute to the integral. This completes the proof of formula (\ref{C-lim}).

\subsection*{The case $d_0=0$, $d_1=1$}

A general form of the boundary conditions in this case is as follows:
\begin{equation*}
\left\{
\begin{array}{r}
a_0y(0) + b_0y(1) = 0,\\
a_1y'(0) + b_1y'(1)+ c_1y(0) + f_1y(1) = 0.
\end{array}
\right.
\end{equation*}
Without loss of generality we can assume $a_1 \neq 0$. Then the boundary conditions of the adjoint operator have the form
\begin{equation*}
\left\{
\begin{array}{r}
\displaystyle\overline{b}_0y'(0) + \overline{a}_0y'(1)+ \frac 1{\overline{a}_1}\,(\overline{c_1b}_0-\overline{f_1a}_0)y(0)= 0,\\
\overline{b}_1y(0) + \overline{a}_1y(1) = 0.
\end{array}
\right.
\end{equation*}

We introduce the following  notation:
\begin{equation*}
\mathfrak A = b_1 a_0 + a_1 b_0;\qquad \mathfrak B = f_1 a_0 - c_1 b_0;\qquad \mathfrak C = a_1 a_0 + b_1 b_0
\end{equation*}
(recall that $\mathfrak A\ne0$ by regularity of boundary conditions).

\subsubsection*{The case $\mathfrak C=\pm \mathfrak A$, $\mathfrak B = 0$: double eigenvalues}

In this case the system of boundary conditions for the operator $\mathbb{L}_0$ can be simplified as follows:
\begin{equation*}
a_0y(0) + b_0y(1) = 0,\qquad a_1y'(0) + b_1y'(1) = 0.
\end{equation*}
Suppose that\footnote{In the case $\mathfrak C = \mathfrak A$ all formulas are quite similar if we write the asymptotic expansions in powers of $N-\frac 12$.} $\mathfrak C = -\mathfrak A$. 
Then we have three variants: 
\begin{itemize}
	\item[1.] $a_1+b_1=0$, $a_0+b_0=0$;
    \item[2.] $a_1+b_1=0$, $a_0+b_0\neq 0$;
    \item[3.] $a_1+b_1\neq 0$, $a_0+b_0=0$.
\end{itemize}
It is easy to see that the third variant can be obtained from the second one by substitution $\mathbb{L}_0^*$ for $\mathbb{L}_0$. 

\paragraph{Variant $a_1+b_1=0$, $a_0+b_0=0$: no Jordan blocks.}
In this simple case the boundary conditions are reduced to the periodic ones:
\begin{equation*}
y'(0) - y'(1) = 0, \qquad y(0) - y(1) = 0.
\end{equation*}
The operator $\mathbb{L}_0$ with these boundary conditions is selfadjoint. Its eigenvalues and eigen\-functions have the form
\begin{eqnarray*}
\lambda_1^0 = 0, &&   y_1(x)=z_1(x)\equiv 1;\\
\lambda_{2N}^0=\lambda_{2N+1}^0=(2\pi N)^2, &&
y_{2N}(x) = z_{2N}(x) = C\sin(2\pi N x),\\
&& y_{2N+1}(x) = z_{2N+1}(x) = C\cos(2\pi N x),\qquad N\in\mathbb{N}.
\end{eqnarray*}
Taking into account the normalization condition we have
$$
y_{2N}(x)\overline{z_{2N}(x)} = 1-\cos(4 \pi N x);\qquad
y_{2N+1}(x)\overline{z_{2N+1}(x)} = 1+\cos(4 \pi N x).
$$ 
The pairwise summation yields a constant that disappears after the integration. Formula (\ref{C-lim}) is obvious.

\paragraph{Variant $a_1+b_1=0$, $a_0+b_0\neq0$: Jordan blocks.}
The boundary conditions have the form
\begin{eqnarray*}
\mathbb{L}_0: & a_0y(0) + b_0y(1) = 0, & y'(0) -y'(1) = 0;\\
\mathbb{L}_0^*: & \overline{b}_0z'(0) + \overline{a}_0z'(1) = 0, & z(0) - z(1) = 0.
\end{eqnarray*}

We write down the eigenvalues and eigenfunctions, see Appendix, part 2. Taking into account the condition (\ref{prisoed}) we obtain
\begin{equation}\label{cos4Nx}
y_{2N}(x)\overline{\widehat z_{2N+1}(x)}+\widehat y_{2N+1}(x) \overline{z_{2N}(x)} = 2+2\cos(4 \pi N x)\, \frac{(a_0 + b_0)(2x-1)} {a_0-b_0}. 
\end{equation}
Subtracting a proper smooth function we can assume that ${\mathfrak q}$ satisfies additional conditions  
\begin{equation}\label{2x-1}
\int\limits_{[0,\frac 12]} (2x-1)\,{\mathfrak q}(dx)=\int\limits_{[\frac 12,1]} (2x-1)\,{\mathfrak q}(dx)=0.
\end{equation}
Then the measure $\widetilde{\mathfrak q}(dx)=(2x-1)\,{\mathfrak q}(dx)$ satisfies the assumptions of Proposition \ref{cosinusy} on segments $[0,\frac 12]$ and $[\frac 12,1]$. Therefore the right-hand side in (\ref{cos4Nx}) vanishes after integration and passage to the limit in (\ref{summa}). Since (\ref{2x-1}) implies $\int\limits_{[0,1]} y_1(x)\overline{z_1(x)}\,{\mathfrak q}(dx)=0$, formula (\ref{C-lim}) is proved.

\subsubsection*{The case $\mathfrak C=\pm \mathfrak A$, $\mathfrak B \neq 0$: 
asymptotically close eigenvalues}

As in the previous case we suppose that $\mathfrak C = -\mathfrak A$ (the case $\mathfrak C = \mathfrak A$ is similar). Then the assumptions on $\mathfrak A$, $\mathfrak B$, $\mathfrak C$ can be rewritten as follows: 
$$
 (a_1 + b_1)(a_0 + b_0) = 0;\qquad f_1 a_0 - c_1 b_0 \neq 0.
$$
We again have three variants:  
\begin{itemize}
	\item $a_0 + b_0 = 0$, $a_1 + b_1 \neq 0$, $c_1 + f_1 \neq 0$;
	\item $a_0 + b_0 \neq 0$, $a_1 + b_1 = 0$;
	\item $a_0 + b_0 = 0$, $a_1 + b_1 = 0$, $c_1 + f_1 \neq 0$.
\end{itemize}
One can easily see that the second variant can be obtained from the first one by substitution $\mathbb{L}_0$ for $\mathbb{L}_0^*$.

\paragraph{The first variant.}
We write down the asymptotic expansions for eigenvalues and eigen\-functions up to $O(N^{-4})$, 
see Appendix, part 3.1, and take into account the normalization condition. Combining pairwise the terms corresponding to asymptotically close eigenvalues we obtain
\begin{eqnarray*}
y_{2N}(x)\overline{z_{2N}(x)} &+& 
y_{2N+1}(x)\overline{z_{2N+1}(x)} =
\eta_N^{+}(x)\overline{\zeta_N^{+}(x)} +\eta_N^{-}(x)\overline{\zeta_N^{-}(x)}
\\
= 2 &+&  2\cos(4\pi N x)\,\frac{(a_1 + b_1)(1-2x)}{a_1-b_1} \\
&+& 2 \sin(4\pi N x)\,\frac{(c_1+f_1)(1-2x)(b_1x-a_1(1-x))}{(a_1-b_1)^2 \pi N} + O(N^{-2}).
\end{eqnarray*}
The first two terms are summed up as in formula (\ref{cos4Nx}) and the last two ones -- as in (\ref{Neumann}). Formula (\ref{C-lim}) is proved.

\paragraph{The third variant.}
In this variant the system of boundary conditions for the operator $\mathbb{L}_0$ can be reduced:
\begin{equation*}
y(0) - y(1) = 0,\qquad y'(0) - y'(1) +c_1y(0) = 0.
\end{equation*}
We write down the asymptotic expansions for eigenvalues and eigenfunctions up to $O(N^{-6})$, 
see Appendix, part 3.2, and take into account the normalization condition. Combining pairwise the terms corresponding to asymptotically close eigenvalues we obtain
\begin{eqnarray*}
y_{2N}(x)\overline{z_{2N}(x)} + 
y_{2N+1}(x)\overline{z_{2N+1}(x)} &=&
\eta_N^{+}(x)\overline{\zeta_N^{+}(x)} +\eta_N^{-}(x)\overline{\zeta_N^{-}(x)}\\
&=& 2 + \sin(4\pi N x)\, \frac{c_1(2x-1)}{2\pi N} + O(N^{-2}).
\end{eqnarray*}
This series can be summed up as in formula (\ref{Neumann}). Formula (\ref{C-lim}) is proved.

\subsubsection*{Strongly regular case $\mathfrak C\neq \pm \mathfrak A$}

To simplify the proof we use the following obvious lemma:

\begin{lemma}\label{pairwise}
Let $\lim\limits_{k\to\infty}\frac{a_k}{k}=0$. Then
\begin{eqnarray}
({\cal C},1)\text{-}\sum\limits_{k=1}^{\infty} a_k &=& ({\cal C},1)\text{-}\sum\limits_{k=1}^\infty (a_{2k-1}+a_{2k})-\frac 12\cdot({\cal C},1)\text{-}\lim\limits_{k\to \infty} a_{2k}
\nonumber\\
&=& ({\cal C},1)\text{-}\sum\limits_{k=1}^\infty (a_{2k}+a_{2k+1})+a_1-\frac 12\cdot({\cal C},1)\text{-}\lim_{k\to \infty} a_{2k+1},
\label{odd-even}
\end{eqnarray}
i.e. if one of the expressions in the right-hand side of (\ref{odd-even}) converges then the series in the left-hand side converges.
\end{lemma}

We write down the asymptotic expansions for eigenvalues and eigenfunctions up to $O(N^{-2})$, 
see Appendix, part 4. It is easy to see that $\lim\limits_{N\to \infty}\frac 1N\,y_N(x)\overline{z_N(x)}=0$, so we can apply the second part of formula (\ref{odd-even}). We start from pairwise sums
\begin{eqnarray*}
y_{2N}(x)\overline{z_{2N}(x)} &+& 
y_{2N+1}(x)\overline{z_{2N+1}(x)}
=
\eta_N^{+}(x)\overline{\zeta_N^{+}(x)} +\eta_N^{-}(x)\overline{\zeta_N^{-}(x)}
\\
= 2 &+& 2\cos(4\pi N x)V_0(x,\alpha)
+\frac{2}{N}\sin(4\pi N x)W_1(x,\alpha) + O(N^{-2}).
\end{eqnarray*}
The first two terms here are summed up as in formula (\ref{cos4Nx}), the last two ones -- as in (\ref{Neumann}) taking into account the relation (\ref{VW}).

Now we consider the last limit in formula (\ref{odd-even}). It can be rewritten in two ways depending on $\alpha$:
\begin{multline*}
({\cal C},1)\,\text{-}\lim_{N\to \infty} y_{2N+1}(x)\overline{z_{2N+1}(x)}\\
=1+\lim_{k \to \infty} \frac{1}{k}\sum \limits_{N=1}^k 
\cos(4\pi N x)V_0(x,\pm\alpha)+\lim_{k \to \infty} \frac{1}{k}\sum \limits_{N=1}^k \sin(4\pi N x)V_1(x,\pm\alpha)\\
+\lim_{k \to \infty} \frac{1}{k}\sum \limits_{N=1}^k \frac{1}{N}\cos(4\pi N x)W_0(x,\pm\alpha)+\lim_{k \to \infty} \frac{1}{k}\sum \limits_{N=1}^k \frac{1}{N}\sin(4\pi N x)W_1(x,\pm\alpha).
\end{multline*}
The constant disappears after integration as before. The second and the third terms are uniformly bounded and converge pointwise. Moreover, the limit equals zero everywhere except points $0$ and $1$ (here we again used the relations (\ref{VW})). The last two terms converge to zero uniformly. By the Lesbegue Theorem, we can pass to the limit under the integral sign. By the assumptions imposed on ${\mathfrak q}$ the endpoints do not contribute to the integral, and formula (\ref{C-lim}) is proved.
\end{proof}

\section{Proof of the main result}

\begin{proof}[Proof of Theorem \ref{th3}]

We start from formula (12) from the paper \cite{SZN}. For $n=2$ and $x=y$ it reads
\begin{equation}\label{227}
G_0(x,x,\lambda)=\tilde{G_0}(x,x,z) = \frac{\Delta_{1,1}(z) + e^{-2izx}\Delta_{1,2}(z) - e^{2izx}\Delta_{2,1}(z) - \Delta_{2,2}(z)}{2iz\Delta(z)}
\end{equation}
 (recall that $z=\lambda^{\frac 12}$). Here $\Delta(z)$ and $\Delta_{\alpha,\beta}(z)$ stand for determinants of order $n$ matrices defined in \cite[Sec. 2.1]{SZN}. In our case they have the following asymptotics as $z\to\infty$:
\begin{multline*}
\Delta(z) = \hat{\Delta}(z)e^{iz(a-b)}(iz)^{d_0+d_1}\cdot(1+O(z^{-1})),\\
\hat{\Delta}(z)=
\begin{vmatrix}
	a_{0}+b_{0}e^{iz(b-a)} & (-1)^{d_0}(a_{0}e^{iz(b-a)}+b_{0}) \\
	a_{1}+b_{1}e^{iz(b-a)} & (-1)^{d_1}(a_{1}e^{iz(b-a)}+b_{1}) \\
\end{vmatrix} ; 
\end{multline*}
\begin{equation*}
\Delta_{1,1}(z) = \hat{\Delta}_{1,1}(z)(iz)^{d_0+d_1}\cdot(1+O(z^{-1})),\qquad
\hat{\Delta}_{1,1}(z)=
\begin{vmatrix}
b_{0} & (-1)^{d_0}(a_{0}e^{iz(b-a)}+b_{0}) \\
	b_{1} & (-1)^{d_1}(a_{1}e^{iz(b-a)}+b_{1}) \\
\end{vmatrix} ;
\end{equation*}
\begin{equation*} \Delta_{1,2}(z) = 
\hat{\Delta}_{1,2} e^{iz(a+b)}(iz)^{d_0+d_1} \cdot(1+O(z^{-1})),\qquad
\hat{\Delta}_{1,2}=
\begin{vmatrix}
	a_{0} & b_{0} \\
	a_{1} & b_{1} \\
\end{vmatrix} ;
\end{equation*}
\begin{equation*}
\Delta_{2,1}(z) = 
\hat{\Delta}_{2,1}e^{-iz(a+b)}(iz)^{d_0+d_1}\cdot(1+O(z^{-1})),\qquad
\hat{\Delta}_{2,1}=(-1)^{d_0+d_1+1}\cdot \hat{\Delta}_{1,2};
\end{equation*}
\begin{multline*} 
\Delta_{2,2}(z) = 
\hat{\Delta}_{2,2}(z)e^{iz(a-b)}(iz)^{d_0+d_1}\cdot(1+O(z^{-1})),\\
\hat{\Delta}_{2,2}(z)=
\begin{vmatrix}
	 a_{0}+b_{0}e^{iz(b-a)} & (-1)^{d_0}b_{0} \\
	 a_{0}+b_{0}e^{iz(b-a)} & (-1)^{d_1}b_{1} \\
\end{vmatrix} .
\end{multline*}
By assumptions $|z|=R$ is separated from $|\lambda_N^0|^{\frac{1}{2}}$. Due to regularity of the boundary conditions (\ref{Boundary_Conditions}) the determinant $\hat{\Delta}(z)$ is separated from zero. Therefore,
\begin{equation*}
\tilde{G_0}(x,x,z)
= \frac{
\hat{\Delta}_{1,1}(z)e^{2iz(b-a)} + 
\hat{\Delta}_{1,2}e^{2iz(b-x)} -\hat{\Delta}_{2,1}e^{2iz(x-a)} -
\hat{\Delta}_{2,2}(z)
}{2iz\hat{\Delta}(z)}\cdot (1+O(z^{-1})).
\end{equation*}
 Since $z=Rw\in R(\Gamma_1 \cup \Gamma_2)$ belongs to the upper half-plane, all exponents are bounded uniformly and tend to zero as $R\to\infty$ for all $x\in (a,b)$ and $w\in \Gamma_1 \cup \Gamma_2$. By the Lebesgue Theorem we obtain
\begin{multline*}
	\lim_{R\to \infty}\int\limits_{|\lambda|=R^2} G_0(x,x,\lambda)^2\,d\lambda = 
    \lim_{R\to \infty}\int\limits_{z=R(\Gamma_1 \cup \Gamma_2)} \!\tilde{G_0}(x,x,z)^2\,2z\,dz \\
= -\lim_{R\to \infty}\int\limits_{z=R(\Gamma_1 \cup \Gamma_2)} \!\Big(\frac{\hat{\Delta}_{2,2}(z)} {\hat{\Delta}(z)}\Big)^2\, \frac{dz}{2z} = -\lim_{R\to \infty}\int\limits_{z=R(\Gamma_1 \cup \Gamma_2)}\frac{dz}{2z}= -\frac{\pi i}{2}.
\end{multline*}
\end{proof}

\begin{proof}[Proof of Theorem \ref{basic-1}]
We consider formula (\ref{Splitting}) and pass to the limit as $R\to\infty$ as it is explained in Theorem \ref{th2}. 

Theorem \ref{th2} shows that the first term in (\ref{Splitting}) converges to the sum of the first two terms in (\ref{SL-SSh-general}). Further, the 	integrand in the second term of (\ref{Splitting}) has a summable majorant in view of the estimate from Proposition \ref{Estimates_G_0}. Theorem \ref{th3} and the Lebesgue Theorem provide the last term in (\ref{SL-SSh-general}).

It is well known (see e.g. \cite{Shk}, or the proof of the Theorem \ref{th2}) that if the boundary conditions (\ref{Boundary_Conditions}) are strongly regular, then the values $|\lambda_N^0|^{\frac{1}{2}}$ are asymptotically separated. Thus in this case passage to the limit in Theorem \ref{th2} corresponds to summation of the series $\mathcal{S}({\mathfrak q})$ by mean-value method, and the statement of Theorem in this case follows.

If the boundary conditions (\ref{Boundary_Conditions}) are regular but not strongly regular then the values $|\lambda_N^0|^{\frac{1}{2}}$ are either pairwise asymptotically close or pairwise coincide. Therefore passage to the limit in Theorem \ref{th2} corresponds to summation of the series $\mathcal{S}({\mathfrak q})$ in the following way: first we add pairwise the asymptotically the terms corresponding to close or coinciding eigenvalues, then the obtained series is summed up by mean-value method.

It remains to notice that $\lambda_N({\mathfrak q}) - \lambda_N \to0$ as $N\to\infty$. Therefore Lemma \ref{pairwise} provides the statement of Theorem in this case.
\end{proof}

\section*{Appendix}
\subsection*{1. The case $d_0=d_1=1$}

The square roots of the eigenvalues of the operator $\mathbb{L}_0$ have the following asymptotics:
$$
\rho_{N+1}:=(\lambda_{N+1}^0)^{\frac 12} = \pi N + \frac{f_1-c_0}{\pi N} + (-1)^N\,\frac{c_1-f_0}{\pi N} + O(N^{-2}).
$$
The eigenfunctions of the operators $\mathbb{L}_0$ and $\mathbb{L}_0^*$ for $\rho_N\neq0$ have the form
\begin{eqnarray*}
y_N(x) &=& C_1\,\Big(\cos(\rho_N x) - c_0\,\frac  {\sin(\rho_N x)}{\rho_N} + f_0\,\frac {\sin(\rho_N (1-x))}{\rho_N}\Big); \\ 
\overline{z_N(x)} &=& C_2\,\Big(\cos(\rho_N x) - c_0 \,\frac {\sin(\rho_N x)}{\rho_N} - c_1 \,\frac  {\sin(\rho_N (1-x))}{\rho_N}\Big).
\end{eqnarray*}
The asymptotics of eigenfunctions is given by
\begin{eqnarray*}
y_{N+1}(x) &=& C_1\,\Big( \cos(\pi N x) - \sin(\pi N x)\,\frac{c_0 (1-x) + f_1 x}{\pi N}\\
&&- (-1)^N\sin(\pi N x)\,\frac{f_0 (1-x)+c_1 x}{\pi N}\Big) + O(N^{-2});\\
\overline{z_{N+1}(x)} &=& C_2\,\Big( \cos(\pi N x) - \sin(\pi N x)\, \frac{c_0 (1-x)+f_1 x}{\pi N}\\ 
&&+ (-1)^N \sin(\pi N x)\,\frac{c_1(1- x) + f_0 x}{\pi N}\Big)  + O(N^{-2}).
\end{eqnarray*}
The asymptotics of scalar products is
$$
\langle y_{N+1}, z_{N+1}\rangle = \frac{C_1C_2}{2} + O(N^{-2}).
$$

\subsection*{2. The case $d_0=0$, $d_1=1$. Jordan blocks}

Recall that we consider the case $\mathfrak C=-\mathfrak A$. In this case the operators $\mathbb{L}_0$ and $\mathbb{L}_0^*$ have the eigenvalue $\lambda_1^0=0$ corresponding to eigenfunctions $y_1(x)=x-\frac{a_0}{a_0+b_0}$ and $z_1(x)\equiv const$. The constant is chosen to meet the normalization conditions. All other eigenvalues are $\lambda_{2N}^0=\lambda_{2N+1}^0=(2\pi N)^2$, $N\in\mathbb{N}$.

The corresponding eigenfunctions and adjoined functions satisfying the first pair of conditions (\ref{prisoed}) are as follows:
\begin{eqnarray*}
y_{2N}(x) &=& C_1\sin(2\pi N x),\\
\widehat y_{2N+1}(x) &=& C_1\Big(\frac{x \cos(2 \pi N x)}{4 \pi N} + \frac{\sin(2 \pi N x)}{16 \pi^2 N^2} - \frac{b_0 \cos(2 \pi N x)}{4 \pi N (a_0 + b_0)}\Big); 
\end{eqnarray*}
\begin{eqnarray*}
\overline{z_{2N}(x)} &=& C_2\cos(2\pi N x),\\
\overline{\widehat z_{2N+1}(x)} &=& C_2\Big(-\frac{x \sin(2 \pi N x)}{4 \pi N} - \frac{\cos(2 \pi N x)}{16 \pi^2 N^2} + \frac{a_0 \sin(2 \pi N x)}{4 \pi N (a_0 + b_0)}\Big). 
\end{eqnarray*}
Scalar products: 
\begin{equation*}
\langle y_{2N}, \widehat z_{2N+1}\rangle = \langle \widehat y_{2N+1}, z_{2N}\rangle = \frac{C_1C_2(a_0-b_0)}{16 \pi N(a_0 + b_0)}
\end{equation*}
(notice that $a_0 \neq b_0$ since $\mathfrak A \neq 0$).

\subsection*{3. The case $d_0=0$, $d_1 = 1$. Asymptotically close eigenvalues}

Recall that we again consider the case $\mathfrak C=-\mathfrak A$. In this case all the eigenvalues of $\mathbb{L}_0$ except for $\lambda_1^0$ pair up,  $\lambda_{2N}$ and $\lambda_{2N+1}$, $N\in\mathbb{N}$, which come close as $N\to\infty$.
Denote the square roots of the eigenvalues of these pairs by $\rho_N^{\pm}$. Furthermore, one of them equals $2\pi N$ (without loss of generality let it be $\rho_N^{+}$). Notice that the scalar products of the corresponding eigenfunctions (we denote them by $\eta_N^{\pm}$ and $\zeta_N^{\pm}$) tend to zero as $N\to\infty$. Therefore we write down the asymptotic formulas of eigenvalues $\rho_N^{-}$ and eigenfunctions $\eta_N^{-}$, $\zeta_N^{-}$ up to $O(N^{-4})$ in the variant 3.1 and up to $O(N^{-6})$ in the variant 3.2. 

\subsubsection*{3.1. Variant $a_0 + b_0 = 0$, $a_1 + b_1 \neq 0$
}

In this variant we obtain
\begin{equation*}
\rho_N^{-} = 2 \pi N + \frac{\mathfrak B}{\pi N \mathfrak A} - \frac{6\mathfrak A \mathfrak B^2+\mathfrak B^3}{12 \mathfrak A^3 N^3 \pi^3}+
O(N^{-4}). 
\end{equation*}
The eigenfunctions of the operators $\mathbb{L}_0$ and $\mathbb{L}_0^*$ have the form
\begin{eqnarray*}
\eta_N^{+}(x) &=& C_1\Big((a_1+ b_1) \cos(2 \pi Nx) - (c_1+f_1)\,\frac{\sin(2 \pi N x)}{2 \pi N}\Big), \\
\overline{\zeta_N^{+}(x)} &=& C_2\sin(2 \pi N x);
\end{eqnarray*}
\begin{eqnarray*}
\eta_N^{-}(x) &=& C_1\Big(a_1 \cos(\rho_N^{-} x) + b_1 \cos( \rho_N^{-}(1-x))- c_1\,\frac{\sin(\rho_N^{-} x)} {\rho_N^{-}} + f_1\, \frac{\sin(\rho_N^{-}(1-x))} {\rho_N^{-}}\Big), \\
\overline{\zeta_N^{-}(x)} &=& C_2\Big(b_1 \sin(\rho_N^{-} x) -a_1 \sin(\rho_N^{-}(1-x))\Big).
\end{eqnarray*}
The asymptotics of scalar products:
\begin{eqnarray*}
\langle \eta_N^+,\zeta_N^+ \rangle &=& -\frac{C_1C_2(c_1+f_1)} {4 \pi N};\\
\langle \eta_N^-,\zeta_N^- \rangle &=& C_1C_2 \Big(\frac{(c_1 + f_1) (a_1 + b_1)}{4 \pi N} - \frac{(c_1+f_1)^2(a_1f_1+c_1b_1)}{8(a_1-b_1)^2\pi^3N^3}\Big) + O(N^{-4})
\end{eqnarray*}
(notice that $a_0 \neq b_0$ since $\mathfrak A \neq 0$).

\subsubsection*{3.2. Variant $a_0 + b_0 = 0$, $a_1 + b_1 = 0$}

In this variant we obtain
\begin{equation*}
\rho_N^{-} = 2 \pi N - \frac{c_1}{2\pi N} + \frac{c_1^3-12c_1^2}{96\pi^3 N^3}+\frac{c_1^4-6c_1^3}{96\pi^5 N^5} + O(N^{-6}).
\end{equation*}
The eigenfunctions of the operators $\mathbb{L}_0$ and $\mathbb{L}_0^*$ have the form
\begin{equation*}
\eta_N^+(x) = C_1 \sin(2\pi N x),\qquad \overline{\zeta_{N}^+(x)} = C_2 \sin(2\pi N x); 
\end{equation*}
\begin{eqnarray*}
\eta_N^-(x) &=& C_1\Big(\sin(\rho^-x) +\sin (\rho^-(1- x))\Big), \\
\overline{\zeta_{N}^-(x)} &=& C_2\Big(\sin(\rho^-x) + \sin (\rho^(1- x))\Big).
\end{eqnarray*}
The asymptotics of scalar products:
\begin{eqnarray*}
\langle \eta_N^+,\zeta_N^+ \rangle &=& \frac{C_1C_2}{2};\\
\langle \eta_N^-,\zeta_N^- \rangle &=& C_1C_2 \Big(\frac{c_1^2}{8\pi^2 N^2}-\frac{c_1^4-4c_1^3}{128\pi^4N^4}\Big) + O(N^{-6}).
\end{eqnarray*}

\subsection*{4. The case $d_0=0$, $d_1=1$. Separated eigenvalues}

In this case the square roots of the eigenvalues $\lambda_{2N}$, $\lambda_{2N+1}$, $N\in\mathbb{N}$ of the operator $\mathbb{L}_0$ form two sequences asymptotically close to two different arithmetic progressions with arithmetical ratio $2\pi$. Denote these roots by $\rho_N^{\pm}$. Then we have, as $N\to\infty$,
\begin{equation*}
\rho_N^\pm = 2 \pi N \pm \alpha + \frac{\mathfrak B}{2\pi N\mathfrak A} + O(N^{-2}),
\end{equation*}
where
\begin{equation*}
\alpha = i\log\left(-\frac{\mathfrak C}{\mathfrak A} - \sqrt{\left(\frac{\mathfrak C}{\mathfrak A}\right)^2-1}\right),
\end{equation*}
and the branch of the logarithm is chosen so that $|\Re(\alpha)|<\pi$
(the choice of another branch yields only to renumbering of eigenvalues). Notice that the condition $\mathfrak C\ne \pm \mathfrak A$ implies $\sin(\alpha) \neq 0$.

The eigenfunctions of the operators $\mathbb{L}_0$ and $\mathbb{L}_0^*$ have the form
\begin{eqnarray*}
\eta_N^\pm(x) &=& a_0 \sin(\rho_N^\pm x) - b_0 \sin(\rho_N^\pm(1-x)), \\
\overline{\zeta_N^\pm(x)} &=& b_0 \cos(\rho_N^\pm x) + a_0 \cos(\rho_N^\pm (1-x)) + \mathfrak B\,\frac{ \sin(\rho_N^\pm x)}{a_1 \rho_N^\pm}.
\end{eqnarray*}
The asymptotics of scalar products:
\begin{equation*}
\langle \eta_N^\pm,\zeta_N^\pm \rangle = \pm\sin(\alpha)\frac{a_0^2 - b_0^2}{2} + \mathfrak B\frac{\mathfrak A a_0 + (\mathfrak A b_0 + a_1(a_0^2-b_0^2)) \cos(\alpha)}{4\pi \mathfrak A a_1 N} + O(N^{-2})
\end{equation*}
(notice that $a_0 \neq \pm b_0$ since $\mathfrak C \neq \pm\mathfrak A$).

The normalized products have the following asymptotics:
\begin{eqnarray*}
\eta_N^\pm(x)\overline{\zeta_N^\pm(x)} = 1 &+& \cos(4\pi N x)V_0(x,\pm\alpha)+\sin(4\pi N x)V_1(x,\pm\alpha)\\
&+& \frac{1}{N}\cos(4\pi N x)W_0(x,\pm\alpha)+\frac{1}{N}\sin(4\pi N x)W_1(x,\pm\alpha) + O(N^{-2}),
\end{eqnarray*}
where
\begin{equation*}
V_0(x,\alpha) = \frac{\sin(\alpha (2x - 1))}
{(a_0^2 - b_0^2)\sin(\alpha)}\,
(a_0^2 + b_0^2+2 a_0 b_0 \cos(\alpha));
\end{equation*}
\begin{equation*}
V_1(x,\alpha) = \frac{\cos(\alpha (2x - 1))}
{(a_0^2 - b_0^2)\sin(\alpha)}\,
(a_0^2 + b_0^2+2 a_0 b_0 \cos(\alpha));
\end{equation*}
\begin{equation*}
W_0(x,\alpha) = \frac{\mathfrak B(2 {\cal R}_1 \sin(\alpha)\cos(2 \alpha x) - {\cal R}_2 \sin(2 \alpha x))}
{4 \mathfrak A a_1 (a_0^2 - b_0^2)^2 \pi\sin^2(\alpha)};
\end{equation*}
\begin{equation*}
W_1(x,\alpha) = -\frac{\mathfrak B(2 {\cal R}_1 \sin(\alpha)\sin(2 \alpha x) + {\cal R}_2 \cos(2 \alpha x))}
{4 \mathfrak A a_1 (a_0^2 - b_0^2)^2 \pi\sin^2(\alpha)};
\end{equation*}
\begin{eqnarray*}
{\cal R}_1 &=&  a_0 b_0 (3 \mathfrak A b_0 + a_1 (a_0^2 - b_0^2) (1 + 2 x)) + 2 (a_0^2 + b_0^2) (\mathfrak A b_0 + a_1 (a_0^2 - b_0^2) x) \cos(\alpha) \\  
&+& a_0 b_0 (\mathfrak A b_0 + a_1 (a_0^2 - b_0^2) (2x-1)) \cos(2 \alpha); 
\end{eqnarray*}
\begin{multline*}
{\cal R}_2 = 4 \mathfrak A a_0^2 b_0 + 2 a_1 (a_0^4 - b_0^4) (1-x) \\
+ a_0 (\mathfrak A(2 a_0^2 + 5 b_0^2)  
+ a_1 b_0 (a_0^2 - b_0^2) (5 - 2 x)) \cos(\alpha)\\ 
+ 2 (a_0^2 + b_0^2) (\mathfrak A b_0 + a_1 (a_0^2 - b_0^2) x) \cos(2 \alpha) \\ 
+ a_0 b_0(\mathfrak A b_0  + a_1 (a_0^2 - b_0^2) (2x-1)) \cos(3 \alpha).
\end{multline*}

\begin{lemma} The functions $V_0(x,\alpha)$, $V_1(x,\alpha)$, $W_0(x,\alpha)$ and $W_1(x,\alpha)$ are continuous in both variables if $\sin(\alpha) \neq 0$ and satisfy the following identities:
\begin{eqnarray}
V_0(x,\alpha) \equiv V_0(x,-\alpha),& \quad &
V_1(x,\alpha) \equiv -V_1(x,-\alpha);
\nonumber\\
W_1(x,\alpha) \equiv W_1(x,-\alpha),& \quad &
W_0(x,\alpha) \equiv -W_0(x,-\alpha);
\nonumber\\
V_0({\textstyle \frac{1}{2}},\alpha) \equiv 0, &&
W_1({\textstyle \frac{1}{2}},\alpha) \equiv 0.
\label{VW}
\end{eqnarray}
\end{lemma}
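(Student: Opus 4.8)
The whole statement reduces to direct inspection of the closed-form expressions for $V_0,V_1,W_0,W_1$, so my plan is to treat each clause in turn. \emph{Continuity} is immediate. In the strongly regular case $\mathfrak C\neq\pm\mathfrak A$ one has $a_0\neq\pm b_0$, hence $a_0^2-b_0^2\neq0$; moreover $\mathfrak A\neq0$ and $a_1\neq0$. Thus on the set $\{\sin\alpha\neq0\}$ every denominator occurring in the four formulas is bounded away from zero, while the numerators are polynomials in $x,\cos\alpha,\sin\alpha,\cos(2\alpha x),\sin(2\alpha x)$. Joint continuity in $(x,\alpha)$ follows at once.

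For the \emph{parity relations} I would merely record how each building block behaves under $\alpha\mapsto-\alpha$: the factors $\sin(\alpha(2x-1))$, $\sin\alpha$, $\sin(2\alpha x)$ are odd, whereas $\cos(\alpha(2x-1))$, $\cos\alpha$, $\cos(2\alpha x)$ are even; and since ${\cal R}_1$, ${\cal R}_2$ involve $\alpha$ only through $\cos\alpha,\cos2\alpha,\cos3\alpha$, both are even in $\alpha$. Substituting these parities into the formulas shows immediately that $V_0,W_1$ are even and $V_1,W_0$ are odd, which is the content of the first two lines of (\ref{VW}); note that $-\alpha$ again satisfies $\cos\alpha=-\mathfrak C/\mathfrak A$, so we never leave the admissible range. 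The identity $V_0(\tfrac12,\alpha)\equiv0$ is equally trivial, the numerator of $V_0$ carrying the factor $\sin(\alpha(2x-1))$, which vanishes at $x=\tfrac12$.

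The one clause that requires genuine computation, and the step I expect to be the main obstacle, is $W_1(\tfrac12,\alpha)\equiv0$. Setting $x=\tfrac12$ (so that $\sin(2\alpha x)=\sin\alpha$ and $\cos(2\alpha x)=\cos\alpha$), it amounts to proving
\[
{\cal N}:=2\,{\cal R}_1\big|_{x=1/2}\,\sin^2\alpha+{\cal R}_2\big|_{x=1/2}\,\cos\alpha=0 .
\]
A product-to-sum expansion collapses the $\cos3\alpha$ and $\cos4\alpha$ harmonics, leaving ${\cal N}$ a quadratic in $\cos\alpha$; this quadratic, however, does \emph{not} vanish for unconstrained $\alpha$, so the definition of $\alpha$ must enter. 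From $\alpha=i\log\!\big(-\tfrac{\mathfrak C}{\mathfrak A}-\sqrt{(\mathfrak C/\mathfrak A)^2-1}\big)$ one reads off $\cos\alpha=-\mathfrak C/\mathfrak A$, and inserting $\mathfrak A=b_1a_0+a_1b_0$, $\mathfrak C=a_1a_0+b_1b_0$ (the $b_1$-terms cancel) gives the working relation
\[
\mathfrak A\,(a_0\cos\alpha+b_0)=-a_1(a_0^2-b_0^2).
\]
The plan is to exploit the auxiliary identity ${\cal R}_2|_{x=1/2}=2\cos\alpha\,{\cal R}_1|_{x=1/2}+2\mathfrak A(a_0^2-b_0^2)(a_0\cos\alpha+b_0)$, which holds for all $\alpha$, to collapse ${\cal N}$ to $2{\cal R}_1|_{x=1/2}+2\mathfrak A(a_0^2-b_0^2)(a_0\cos\alpha+b_0)\cos\alpha$; the working relation then turns the second summand into $-2a_1(a_0^2-b_0^2)^2\cos\alpha$, while a parallel reduction yields ${\cal R}_1|_{x=1/2}=a_1(a_0^2-b_0^2)^2\cos\alpha$, so that ${\cal N}=0$. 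The hard part is not the bookkeeping but the recognition that the midpoint cancellation is \emph{spectral} rather than formal, i.e. that it rests squarely on $\cos\alpha=-\mathfrak C/\mathfrak A$; once this is organized through the two reductions above, the final computation is short.
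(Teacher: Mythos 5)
Your proposal is correct, and its key algebraic claims check out: the auxiliary identity $\mathcal R_2\big|_{x=1/2}-2\cos\alpha\,\mathcal R_1\big|_{x=1/2}=2\mathfrak A(a_0^2-b_0^2)(a_0\cos\alpha+b_0)$ holds identically in $\alpha$ (the cubic and quartic harmonics cancel, exactly as you predict), the relation $\mathfrak A(a_0\cos\alpha+b_0)=-a_1(a_0^2-b_0^2)$ follows from $\cos\alpha=-\mathfrak C/\mathfrak A$ with the $b_1$-terms cancelling, and under that same relation one indeed has $\mathcal R_1\big|_{x=1/2}=a_1(a_0^2-b_0^2)^2\cos\alpha$, so $\mathcal N=0$. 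Your handling of continuity, the parity relations and $V_0(\frac12,\alpha)\equiv0$ coincides with the paper's, which dismisses these as obvious precisely because $\mathcal R_1,\mathcal R_2$ are even in $\alpha$. For the one substantive clause, $W_1(\frac12,\alpha)\equiv0$, both arguments run on the same engine --- the spectral relation $\mathfrak A\cos\alpha+\mathfrak C=0$ --- but organize the computation differently: the paper rewrites the full numerator $2\mathcal R_1\sin\alpha\sin(2\alpha x)+\mathcal R_2\cos(2\alpha x)$, for all $x$, as $2\sin\bigl(\alpha(x-\frac12)\bigr)$ times an explicit trigonometric sum, so vanishing at $x=\frac12$ is read off from the prefactor; you instead evaluate at $x=\frac12$ and collapse the resulting scalar through two small identities. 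Your route is leaner and makes transparent that the cancellation is spectral rather than formal (the quadratic in $\cos\alpha$ you are left with does not vanish for unconstrained $\alpha$), whereas the paper's global factorization additionally exhibits how the numerator degenerates near $x=\frac12$; either way the lemma is proved.
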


\begin{proof}
Since ${\cal R}_1$ and ${\cal R}_2$ are even functions of $\alpha$, all statements of the Lemma except for the last one are obvious. Taking into account the relation $\mathfrak A\cos(\alpha)+\mathfrak C=0$ we obtain that the numerator in $W_1$ can be rewritten as follows:
\begin{eqnarray*}
2 {\cal R}_1 \sin(\alpha)\sin(2 \alpha x) &+& {\cal R}_2 \cos(2\alpha x) =  2 \sin(\alpha (x-{\textstyle\frac{1}{2}}))\\
 &\times&\Big( \big(a_0 a_1 b_0 (a_0^2-b_0^2)(2 x-1)+a_0b_0^2\mathfrak A\big) \sin(\alpha ({\textstyle\frac{5}{2}} - x))  \\
&+& \big(2a_1 (a_0^4 - b_0^4)x+2b_0(a_0^2 + b_0^2)\mathfrak A\big) \sin(\alpha ({\textstyle\frac{3}{2}} - x)) \\
&-& \big(5 a_0^3 a_1 b_0 + a_0 a_1 b_0^3 + a_0^4 b_1 + 5 a_0^2 b_0^2 b_1\big) \sin(\alpha (x- {\textstyle\frac{1}{2}})) \\
&-& \big(2 a_1(a_0^4- b_0^4)(1-x) + 4 a_0^2 b_0\mathfrak A\big) \sin(\alpha ({\textstyle\frac{1}{2}} + x)) \\
&+& \big(a_0 a_1 b_0(a_0^2- b_0^2)(2x-1)-a_0^3\mathfrak A\big) \sin(\alpha ({\textstyle \frac{3}{2}} + x))\Big),
\end{eqnarray*}
and the last equality in (\ref{VW}) is proved.
\end{proof}

\section*{Acknowledgments}

The main results of the paper, Theorems 2.4 and 2.5, are obtained under support of the Russian Science Foundation grant N14-21-00035. 
Theorem 2.3 was obtained under support of RFBR grant 16-01-00258a.

\end{document}